\newcommand{\etal}{{et~al.}}
\newcommand{\ie}{{i.e.}}
\newcommand{\eg}{{e.g.}}
\def\L{\mathcal L}
\providecommand{\intd}[0]%
{\;\mbox{d}}
\let\oldnl\nl
\newcommand{\nonl}{\renewcommand{\nl}{\let\nl\oldnl}}
\def\TitleOfAlgo{\@ifnextchar({\@TitleOfAlgoAndComment}{\@TitleOfAlgoNoComment}}
\def\@TitleOfAlgoAndComment(#1)#2{\nonl\hspace*{-1.5em}#2 #1\;}
\def\@TitleOfAlgoNoComment#1{\nonl\hspace*{-1.5em}#1\;}
\newcommand{\later}[1]{}
\newcommand{\old}[1]{}
\title{Note on the Number of Almost Ordinary Triangles}
\titlerunning{Note on the Number of Almost Ordinary Triangles}
\author{Adrian Dumitrescu}
{Algoresearch L.L.C., Milwaukee, WI, USA, and 
Research Institute of the University of Bucharest, Romania, and 
Alfr\'ed R\'enyi Institute of Mathematics, Budapest, Hungary}
{ad.dumitrescu@algoresearch.org}
{0000-0002-1118-0321}
{}
\author{J\'anos Pach}
{Alfr\'ed R\'enyi Institute of Mathematics, Budapest, Hungary}
{pach@renyi.hu}
{0000-0002-2389-2035}
{}
\authorrunning{Adrian Dumitrescu and J\'anos Pach}
\keywords{Sylvester--Gallai Theorem, ordinary line, $c$-ordinary triangle,
triangle detection, fast matrix multiplication.}
\begin{document}

\maketitle

\begin{abstract}
  Let $X$ be a set of $n$ points in the plane, not all on a line. According to the Gallai-Sylvester
  theorem, $X$ always spans an \emph{ordinary line}, i.e., one that passes through precisely 2
  elements of $X$. Given an integer $c\ge 2,$ a \emph{line} spanned by $X$ is called
  \emph{$c$-ordinary} if it passes through at most $c$ points of $X$. A \emph{triangle} spanned by 3
  noncollinear points of $X$ is called \emph{$c$-ordinary} if all 3 lines determined by its sides
  are $c$-ordinary. Motivated by a question of Erd\H os, Fulek~\etal~\cite{FMN+17}
  proved that there exists an absolute constant $c > 2$ such that if $X$ cannot be covered by 2
  lines, then it determines at least one $c$-ordinary triangle. Moreover, the number of such
  triangles grows at least linearly in $n$. They raised the question whether the true growth rate of
  this function is superlinear. 

  We prove that if $X$ cannot be covered by 2 lines, and no line passes through more than $n-t(n)$
  points of $X$, for some function $t(n)\rightarrow\infty,$ then the number of $17$-ordinary
  triangles spanned by $X$ is at least constant times $n \cdot t(n)$, i.e., superlinear in $n$. We also
  show that the assumption $t(n)\rightarrow\infty$ is necessary. If we further assume that no line
  passes through more than $n/2-t(n)$ points of $X$, then the number of $17$-ordinary triangles
  grows superquadratically in $n$. This statement does not hold if $t(n)$ is bounded. We close this
  paper with some algorithmic results. In particular, we provide a $O(n^{2.372})$ time algorithm for
  counting all $c$-ordinary triangles in an $n$-element point set, for any $c<n$. 
\end{abstract}

\section{Introduction} \label{sec:intro}

Given a finite point set $X$, a connecting line is called \emph{ordinary} if it is incident to precisely
two points of $X$. The question of whether every noncollinear point set determines an ordinary line
was first considered by Sylvester~\cite{Sy893} at the end of the $19$th century. The first proofs of
existence of such a line were found independently by Gallai and by Melchior~\cite{Mel41} in the 1940s.
This result is now commonly referred to as the \emph{Sylvester--Gallai Theorem}:
Every set of $n$ noncollinear points in the plane admits an ordinary line.
Many related problems and statements can be found in~\cite{BM90,BMP05,Chv21,CFG91,EP95,GT13,KW91,PS09}.

\smallskip
Motzkin~\cite{Mo51} was the first to show that the number of ordinary lines tends to infinity with $n$.
Kelly and Moser~\cite{KM58} proved that the dependence is in fact linear:
They showed that $n$ noncollinear points in the plane determine at least $3n/7$ ordinary lines,
which is tight for $n=7$. Csima and Sawyer~\cite{CS93} raised this bound to $6n/13$ for $n \geq 8$. 
Finally, Green and Tao~\cite{GT13} proved that if $n$ is sufficiently large, then there are at least
$n/2$ ordinary lines. On the other hand, there are arbitrarily large points sets with no more than
$n/2$ ordinary lines; see, \eg, \cite[Ch.~7.2]{BMP05}.  

\smallskip 
Given a finite point set $X$ in the plane and a constant $c\ge 2$, a connecting line is called
a $c$-\emph{ordinary  line} if it is incident to at most $c$ points of $X$; see~\cite{BVZ16,FMN+17}. 
Three noncollinear elements of $X$ make a $c$-\emph{ordinary triangle} if each of
the three lines induced by them is $c$-ordinary.

Motivated by a question of Erd\H os (see~\cite{BM90,BVZ16}),
de Zeeuw~\cite{Zee18} asked whether there exists a constant $c \geq 2$ such that every $n$-element point set $X$
in the plane which cannot be covered by two lines has a $c$-ordinary triangle.
Clearly, the latter assumption is necessary, because if all points of $X$ lie on the union of two lines
and each of these lines contains more than $c$~points, then $X$ spans no $c$-ordinary triangle. 
Fulek, Mojarrad, Nasz\'odi, Solymosi, Stich, and Szedl\'ak~\cite{FMN+17} gave a positive answer to de Zeeuw's
question, with $c=12000$. The value of $c$ was reduced to 11 by Dubroff~\cite{Dub18}. On the other hand,
it is known that the statement is not true with $c=2$ (see, \eg, ~\cite[Fig.~4]{GT13})
\smallskip

It follows from the proof of Fulek~\etal~\cite{FMN+17} that the number of $c$-ordinary triangles grows linearly in~$n$.
They raised the question whether, for a suitable constant $c>2$, the true growth rate is superlinear. The next statement
shows that the problem needs to be carefully formulated, otherwise the answer is negative.

\begin{proposition} \label{prop:k1}
  Let $c, k\ge 2$, and $n$ be any integers satisfying $n \geq 2k+c$.

  Then there exists an $n$-element point set $X$ that cannot be covered by $k$ lines
  and such that the number of $c$-ordinary triangles spanned by $X$ is at most $2k^2 n$.
\end{proposition}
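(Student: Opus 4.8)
The plan is to exhibit an explicit construction meeting the bound. Set $s = 2k - 1$. I would place $n - s$ points on a single line $\ell$ and the remaining $s$ points off $\ell$ in sufficiently general position, meaning that no three of the off-line points are collinear and no line determined by two off-line points passes through any of the $n - s$ points on $\ell$; since only finitely many bad coincidences must be avoided, such a placement clearly exists. Because $n \ge 2k + c$, the line $\ell$ carries $n - s = n - 2k + 1 \ge c + 1$ points, so $\ell$ is not $c$-ordinary.

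The first step is to understand all connecting lines. Any line other than $\ell$ meets $\ell$ in at most one point, so by the general-position assumptions it passes through exactly two points of $X$: either two off-line points, or one off-line point and one point of $\ell$. Hence every connecting line except $\ell$ is ordinary, and in particular $c$-ordinary. Consequently a triangle is $c$-ordinary if and only if none of its three sides is $\ell$, which happens precisely when at most one of its vertices lies on $\ell$ (and every such triple is genuinely noncollinear, again by general position). Counting these triples gives exactly $\binom{s}{3} + \binom{s}{2}(n - s)$ $c$-ordinary triangles. Writing $\binom{s}{3} = \binom{s}{2}\cdot\frac{s-2}{3}$, this equals $\binom{s}{2}\bigl(n - \tfrac{2s+2}{3}\bigr) < \binom{s}{2}\,n = (2k-1)(k-1)\,n < 2k^2 n$, as required.

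The step I expect to be the crux is showing that $X$ cannot be covered by $k$ lines. Suppose lines $m_1, \dots, m_k$ cover $X$; at most one of them equals $\ell$. If one does, the remaining $k - 1$ lines must cover all $s = 2k - 1$ off-line points, but each covers at most two of them by general position, hence at most $2(k-1) = 2k - 2 < s$ in total, a contradiction. If none of the $m_i$ equals $\ell$, I would classify each covering line by whether it passes through a point of $\ell$. A line through a point of $\ell$ meets $\ell$ only there, and were it to contain two off-line points it would be their connecting line and so avoid $\ell$; thus such a line covers at most one off-line point, while a line avoiding $\ell$ covers at most two. Letting $\alpha$ denote the number of the $k$ lines that pass through a point of $\ell$, covering the $n - 2k + 1 \ge c + 1 \ge 3$ points of $\ell$ forces $\alpha \ge 3$, whereas covering all $2k - 1$ off-line points forces $\alpha + 2(k - \alpha) \ge 2k - 1$, i.e.\ $\alpha \le 1$ — a contradiction. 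This completes the argument.
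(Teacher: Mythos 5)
Your proposal is correct and uses essentially the same construction as the paper: $n-2k+1$ points on a line plus $2k-1$ points in general position, with the same count of $\binom{2k-1}{2}n < 2k^2 n$ triangles having at most one vertex on the line. The only difference is that you carefully prove the non-coverability by $k$ lines (via the $\alpha\ge 3$ versus $\alpha\le 1$ contradiction), a step the paper dismisses as ``easy to see,'' and your verification of it is sound.
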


In particular, for a fixed $k$, the number of $c$-ordinary triangles spanned by $X$ is at most linear in $n$. However,
if we assume that there is no line that contains all but a constant number of points of $X$, then the number of
$c$-ordinary triangles indeed grows superlinearly in $n$. More precisely, we have the following.

\begin{theorem} \label{thm:n-t}
  Let $t(n) \leq 0.1 n$ be a function of $n$, with $\lim_{n \to \infty} t(n) =\infty$.
  Let $X$ be an $n$-element point set in the plane that cannot be covered by two lines,
  and suppose that $X$ has at most $n-t(n)$ collinear points.

  Then, for $c=17$, the number of $c$-ordinary triangles spanned by $X$ is at least $\Omega(n \cdot t(n))$.
This bound is tight, apart from the multiplicative constant hidden in the $\Omega$-notation.
\end{theorem}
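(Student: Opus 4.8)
The plan is to split into two regimes according to the size of the richest line, and in the principal regime to manufacture triangles from a cheap supply of ``base'' edges together with a large supply of ``apex'' points. The single structural fact driving everything is that a line carrying more than $17$ points cannot be a side of a $17$-ordinary triangle; calling a line \emph{rich} if it carries at least $18$ points, this says that \emph{every $17$-ordinary triangle has at most one vertex on any rich line}. In the extremal construction all but one point lie on two rich lines, and it is precisely this that drags the count down to $\Theta(n\cdot t(n))$, so the lower bound must recover exactly this product.

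Let $\ell$ be a line containing the maximum number $m$ of points, so $m\le n-t(n)$ and $B:=X\setminus\ell$ has $|B|=n-m\ge t(n)$ points; since $X$ is not coverable by two lines, $B$ is noncollinear. In the main regime $m\ge n/2$, and I would proceed in three steps. First, apply the Kelly--Moser bound~\cite{KM58} to $B$: being noncollinear it spans at least $3|B|/7\ge 3t(n)/7$ lines meeting $B$ in exactly two points; each such line is distinct from $\ell$, hence meets $\ell$ in at most one point, carries at most $3$ points of $X$, and is in particular $17$-ordinary. These are my \emph{bases}. Second, for a fixed base $\{b,c\}$ I count \emph{apexes} $a\in\ell\cap X$ making $\{a,b,c\}$ a $17$-ordinary triangle: the side $bc$ is already fine, so $a$ fails only when $ab$ or $ac$ is rich. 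A point lies on at most $(n-1)/17$ rich lines (they split the remaining $n-1$ points into classes of size $\ge 17$), and each meets $\ell$ in at most one point, so at most $2(n-1)/17+1$ apexes are bad, leaving at least $m-2(n-1)/17-1\ge n/2-2(n-1)/17-1=\Omega(n)$ good ones. Third, each good apex gives a genuine $17$-ordinary triangle whose unique $\ell$-vertex is $a$ and whose off-line pair is the base $\{b,c\}$, so distinct (base, apex) pairs give distinct triangles; multiplying yields at least $\tfrac{3t(n)}{7}\bigl(\tfrac n2-\tfrac{2(n-1)}{17}-1\bigr)=\Omega(n\cdot t(n))$ triangles.

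In the spread regime $m<n/2$ the aim is merely $\Omega(n^2)$ triangles, which suffices because $t(n)\le 0.1n$ gives $\Omega(n^2)=\Omega(n\cdot t(n))$. If no line is rich at all, every side is $17$-ordinary and $X$ spans $\binom n3-O(n^2)=\Omega(n^3)$ triangles. Otherwise I would argue that, when no line carries a constant fraction of $X$, Beck's theorem (equivalently the Szemer\'edi--Trotter bound) keeps the rich lines from covering more than a small constant fraction of the pairs, so the graph whose edges are the thin ($\le 17$-point) lines is dense and far from bipartite, and a supersaturation count then produces $\Omega(n^2)$ thin triangles, all but $O(n^2)$ of which are noncollinear and hence $17$-ordinary. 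I expect this spread regime to be the main obstacle: controlling the rich lines uniformly enough to force the thin graph away from the bipartite threshold is exactly the point where the crude estimate on $\sum_{L\text{ rich}}\binom{|L|}{2}$ is too weak and a genuine use of Beck's theorem is needed, whereas the main regime, which is the one matching the extremal example, is routine once the base/apex decomposition is in place.

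For tightness I would place $n-t(n)$ points on a line $\ell$, $t(n)-1$ points on a second line $\ell'$, and one further point $z$ off both. This set cannot be covered by two lines, and once $t(n)>18$ both $\ell$ and $\ell'$ are rich, so by the structural fact every $17$-ordinary triangle must consist of $z$ together with one point of $\ell$ and one point of $\ell'$; there are at most $(n-t(n))(t(n)-1)=O(n\cdot t(n))$ of these, matching the lower bound up to the constant.
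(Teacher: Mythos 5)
Your main regime ($m\ge n/2$) is handled correctly and completely: the base/apex count is sound, and it is in fact a self-contained version of what the paper does in its Case (i) (the paper packages the ``count bad apexes via rich lines through $b$ and $c$'' step as a separate auxiliary lemma and likewise feeds it Kelly--Moser ordinary lines of $X\setminus\ell$). Your tightness construction is also correct and matches the paper's. But the spread regime $m<n/2$ --- which you yourself flag as the main obstacle --- is a genuine gap, and the route you sketch does not survive the worst configurations in that regime. Take $\ell_1$ with $\lceil n/2\rceil-1$ points, $\ell_2$ with $\lfloor n/2\rfloor$ points, and one point $z$ off both: this set lies in your spread regime, yet the two rich lines cover roughly \emph{half} of all pairs, so the thin graph has only about $n^2/4$ edges (exactly the Mantel threshold) and is bipartite plus a single vertex --- it is neither ``dense'' nor ``far from bipartite.'' Any argument based purely on edge density therefore yields an excess of only $O(n)$ over the threshold, hence at most $O(n^2)$ graph-triangles by Erd\H{o}s--Rademacher-type supersaturation, and this must then survive subtraction of the collinear triples sitting on thin lines, which can themselves number $\Theta(n^2)$ with a nontrivial constant (e.g.\ orchard-type configurations give $\ge n^2/6$ triples on $3$-point lines). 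The constants are not in your favor and you never pin them down; likewise your appeal to ``Beck's theorem / Szemer\'edi--Trotter keeping rich lines from covering a small constant fraction of pairs'' is unjustified for the fixed richness threshold $18$, where the ST constants give a pairs-on-rich-lines bound of the form $C(n^2/18+\alpha n^2)$ that is not small compared to $\binom n2$.

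Two remarks on how to close the gap. First, your base/apex argument does not need $m\ge n/2$: it works verbatim whenever $m\ge\alpha n$ for any fixed $\alpha>2/17$, since the good-apex count is $m-2(n-1)/17-1$. The paper exploits exactly this, splitting at $\alpha=0.2$, so that the hard regime is only $m<0.2n$. Second, in that hard regime the paper avoids counting \emph{pairs} on rich lines (where, as above, the constants fail) and instead counts \emph{lines}: Langer's incidence bound shows that at least $n/1000$ points $p$ lie on at least $0.33n$ spanned lines, hence on at least $0.27n$ lines that are $17$-ordinary; letting $X(p)$ be the points on those lines, de Zeeuw's theorem forces $X(p)$ to span $\ge n^2/124$ lines (the alternative would produce a line with $\ge\gamma\cdot 0.27n>0.2n$ points), and de Zeeuw's corollary bounds the number of non-$17$-ordinary spanned lines by $\frac{4}{256}\binom n2\le n^2/128$, so $\Omega(n^2)$ of the lines spanned by $X(p)$ and missing $p$ are $17$-ordinary; each such line gives a $17$-ordinary triangle with apex $p$, yielding $\Omega(n^3)$ triangles in total. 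Some such line-counting mechanism (rather than pair-counting plus supersaturation) appears to be genuinely needed in the spread case.
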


We can substantially improve on the above bound under the assumption that no line contains more than half of the
points. In fact, it follows from our next theorem that if the maximum number of collinear points is smaller than
$(1/2-\varepsilon)n$, for a small $\varepsilon>0$, then a positive fraction of all triangles are $c$-ordinary. 

\begin{theorem} \label{thm:n/2-t}
  Let $t(n) \leq 0.1 n$ be a function of $n$, with $\lim_{n \to \infty} t(n) =\infty$.
  Let $X$ be an $n$-element point set in the plane with no more than $n/2-t(n)$ collinear points.

  Then, for $c=17$, the number of $c$-ordinary triangles spanned by $X$ is at least $\Omega(n^2 \cdot t(n))$. This bound
  is tight, apart from the multiplicative constant hidden in the $\Omega$-notation. 
\end{theorem}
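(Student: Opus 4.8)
The plan is to sandwich the count between the number of triangles and the number of triangles that have a ``rich'' side, where a line $\ell$ is called \emph{rich} if $k_\ell:=|X\cap\ell|$ exceeds $c=17$. Let $T=\binom n3-\sum_\ell\binom{k_\ell}3$ be the number of non-degenerate triangles. Every non-$c$-ordinary triangle has two vertices on a common rich line and a third vertex off it, so by the union bound the number of non-$c$-ordinary triangles is at most $\sum_{\ell\ \mathrm{rich}}\binom{k_\ell}2(n-k_\ell)$, and hence the number of $c$-ordinary triangles is at least $T-\sum_{\ell\ \mathrm{rich}}\binom{k_\ell}2(n-k_\ell)$. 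Using $\sum_\ell\binom{k_\ell}2=\binom n2$ and the fact that ordinary lines contribute only $O(n^2)$ to $\sum_\ell\binom{k_\ell}3$, a short computation rewrites this as
\[
 \binom n3-\sum_{\ell\ \mathrm{rich}} f(k_\ell)-O(n^2),\qquad f(k):=\binom k3+\binom k2(n-k)=\binom k2\Bigl(n-\tfrac{2k+2}{3}\Bigr).
\]
Thus the whole lower bound reduces to the \emph{extremal estimate} $\sum_{\ell\ \mathrm{rich}} f(k_\ell)\le \binom n3-\Omega(n^2t)$, valid for every configuration in which every line carries at most $M:=n/2-t$ points.

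To establish the extremal estimate I would distinguish two regimes. If no line is long, i.e.\ every line carries at most a small constant fraction of $n$ points, then the Szemer\'edi--Trotter theorem bounds $\sum_{\ell\ \mathrm{rich}}\binom{k_\ell}2$ by strictly less than $\binom n3/n$, so already $\sum_{\ell\ \mathrm{rich}}f(k_\ell)\le n\sum_{\ell\ \mathrm{rich}}\binom{k_\ell}2<\binom n3-\Omega(n^3)$; this yields $\Omega(n^3)\ge\Omega(n^2t)$, and it is here that the explicit value $c=17$ is needed to make the incidence constant small enough. The substantive regime is the one with long lines. The function $f$ is increasing and convex on $[2,M]$ (its leading term is $-k^3/3+nk^2/2$, so $f''(k)\approx n-2k>0$ for $k\le M<n/2$), so $\sum f(k_\ell)$ is maximized by concentrating the mass on a few near-maximal lines; but since two distinct lines meet in at most one point, $j$ lines of $M$ points would cover at least $jM-\binom j2$ points, forcing $j\le2$ whenever $t\le0.1n$. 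Two lines of $M$ points contribute $2f(M)=\binom n3-\tfrac12n^2t+O(nt^2)+O(n^2)$, and every remaining rich line meets each long line in at most one point, hence draws all but at most two of its points from the at most $2t$ points lying off the two long lines; a direct double count among these $\le 2t$ points gives $\sum_{\ell\ \mathrm{rich,\ short}}\binom{k_\ell}2=O(t^2)$, so these lines add at most $n\cdot O(t^2)=O(nt^2)$ to $\sum f(k_\ell)$. Because $t\le 0.1n$, this extra term is dominated by the $\tfrac12 n^2t$ surplus, and the estimate $\sum_{\ell\ \mathrm{rich}} f(k_\ell)\le\binom n3-\Omega(n^2t)$ follows.

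For tightness I would exhibit the matching construction: place $\lfloor n/2\rfloor-t$ points on each of two lines $\ell_1,\ell_2$ and the remaining $\approx 2t$ points in general position. Both $\ell_1,\ell_2$ are rich, so no $c$-ordinary triangle has a side on either, and a $c$-ordinary triangle has at most one vertex on each. Up to $O(nt^2)+O(t^3)=O(n^2t)$ lower-order contributions, the $c$-ordinary triangles are exactly those with one vertex on $\ell_1$, one on $\ell_2$, and one among the $2t$ extra points (the three resulting sides are ordinary by genericity), of which there are $(\tfrac n2-t)^2\cdot 2t=\Theta(n^2t)$. This shows the order $\Omega(n^2t)$ cannot be improved.

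The main obstacle is the extremal estimate of the second paragraph, namely controlling $\sum_{\ell\ \mathrm{rich}}f(k_\ell)$ \emph{uniformly across all size scales of the rich lines simultaneously}. The dangerous scenario is a large family of medium-sized rich lines whose pairs could in principle total a constant fraction of $\binom n2$ and, through the factor $n$ in $f$, swamp the $n^2t$ surplus; excluding this requires the incidence bounds above (or a dyadic summation of Szemer\'edi--Trotter over scales) together with the hypothesis $t\le0.1n$, which is exactly what caps the off-line contribution. Turning this into a clean estimate, rather than a case analysis over the number and sizes of long lines, is the part that needs the most care.
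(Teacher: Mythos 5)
Your global-counting framework (all triangles minus those with a side on a rich line) is a genuinely different route from the paper, which constructs $c$-ordinary triangles explicitly: when some line $\ell$ carries at least $0.2n$ points, the paper applies the Payne--Wood and Kelly--Moser/Beck bounds to $Y=X\setminus\ell$ to produce $\Omega(n\,t(n))$ nearly-ordinary lines in $Y$ and then an auxiliary lemma to attach $\Omega(n)$ third vertices on $\ell$ to each; otherwise it gets $\Omega(n^3)$ triangles from de Zeeuw's refinements of Langer's inequality. Your reduction to the ``extremal estimate'' $\sum_{\ell\ \mathrm{rich}} f(k_\ell)\le\binom{n}{3}-\Omega(n^2 t)$ is itself correct, but that estimate is never proved, and neither of your two regimes can be completed as sketched, so there is a genuine gap.

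First, your convexity argument points the wrong way. The budget constraint is pair-mass, $\sum_\ell\binom{k_\ell}{2}\le\binom{n}{2}$, and the yield per unit of pair-mass, $f(k)/\binom{k}{2}=n-\frac{2k+2}{3}$, is \emph{decreasing} in $k$; equivalently, as a function of $m=\binom{k}{2}$, $f$ is concave with value $0$ at $0$, hence subadditive. So splitting a long line into many rich lines of $18$ points with the same total pair-mass \emph{increases} $\sum f(k_\ell)$: the maximizer under your stated constraints is mass spread over many minimal rich lines, not concentration on two near-maximal lines, and in that regime $\sum f(k_\ell)$ can reach roughly $\binom{n}{2}\cdot n\approx n^3/2>\binom{n}{3}$, making the estimate vacuous without further geometric input. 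Consequently your case analysis ``at most two lines of size $M$ plus short lines'' does not follow, and hybrid configurations (one long line plus many medium rich lines among the $\ge n/2$ remaining points) are not addressed. Second, the geometric input you invoke to kill the many-medium-lines scenario, Szemer\'edi--Trotter, is quantitatively far too weak for $c=17$: solving the incidence bound for the number of $k$-rich lines cubes the incidence constant, and a dyadic summation gives $\sum_{\mathrm{rich}}\binom{k_\ell}{2}\lesssim 10^3\,n^2/c+O(\beta n^2)$ with the best known constants, which for $c=17$ exceeds your target $\binom{n}{3}/n\approx n^2/6$ by a factor of about a thousand. This is precisely why Fulek et al.\ originally needed $c=12000$ and why the paper works with Langer's inequality via de Zeeuw's lemmas, whose constants do support $c=17$. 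Nor can you repair this by enlarging $c$: the theorem fixes $c=17$, and producing many $c'$-ordinary triangles for a large $c'$ is a strictly weaker conclusion. (Your tightness construction --- two lines of $n/2-t$ points plus $2t$ generic points --- is fine, and is a minor variant of the paper's three-parallel-lines example.)
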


Of course, if every line contains fewer than $n/2$ points, $X$ cannot be covered by two lines. It is very likely that
the above results remain true with a much smaller value of $c$, perhaps already with $c=3$. 

\smallskip

There are several interesting algorithmic questions related to the detection and enumeration of almost ordinary
triangles in a finite point set in the plane. We present two such results. 

\begin{proposition} \label{prop:reporting}
  Given a set $X$ of $n$ points in the plane and a positive integer $\tau$, all $\tau$-ordinary triangles
  spanned by $X$ can be reported in $O(n^3)$ time. The order of magnitude of this bound cannot be improved
  in the worst case.
\end{proposition}

The problem admits a faster solution if we need to find just one $\tau$-ordinary triangle or count the number of
such triangles.

\begin{theorem} \label{thm:detection}
  Given a set $X$ of $n$ points in the plane and a positive integer $\tau$, deciding whether $X$ spans
  a $\tau$-ordinary triangle and finding one, if it does,
  or counting all $\tau$-ordinary triangles, can be done in $O(n^\omega)$ time,
where $\omega < 2.372$ is the exponent of the best algorithm for matrix multiplication.
\end{theorem}

Our note is organized as follows. In Section~\ref{sec:prelim}, we collect some classical results and technical lemmas
needed for the proofs. All combinatorial results are proved in Section~\ref{sec:proofs}, while in
the last section, we establish Proposition~\ref{prop:reporting} and Theorem~\ref{thm:detection}.

\smallskip

\noindent\textbf{Acknowledgements.}
The authors thank an anonymous referee for essential corrections in the algorithmic section.
The authors also express their gratitude to Quentin Dubroff for his valuable remarks and,
in particular, for suggesting the formulation of Theorem~\ref{thm:n/2-t}.

The 2nd named author also wants to thank the Simons Laufer Mathematical Sciences Institute (Berkeley),
where part of the work was done during the Spring 2025 special program on Extremal Combinatorics.

\section{Preliminaries} \label{sec:prelim}

In 1951, Dirac~\cite{Dir51} conjectured that every set $X$ of $n$ noncollinear points in the plane has a point incident
to at least $\lfloor n/2\rfloor$ distinct lines connecting it to other elements of $X$. The same conjecture was made,
independently, by Motzkin. Although the conjecture turned out to be false, it is quite possible that it is true up to an
additive constant. Ten years later, Erd\H os~\cite{Erd61} formulated a weaker statement, usually referred to, as the
``weak Dirac conjecture:'' There is a constant $c'>0$ such that every $n$-element noncollinear set $X$  has a point
incident to at least $c'n$ distinct lines connecting it to other elements of $X$. In 1983, this weaker conjecture was
proved in two seminal papers by Beck~\cite{Be83} and, independently, by Szemer\'edi and Trotter~\cite{SzT83}. 

One of the initial observations~\cite{Be83} was the following.

\begin{lemma} \label{lem:beck2} {\rm (Beck~\cite[Obs.~15]{PW14}).}
  Let $P$ be a set of $n$ noncollinear points in the plane. Then at least half the lines
  spanned by $P$ contain at most $3$ points.
\end{lemma}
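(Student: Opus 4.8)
The plan is to reformulate the statement and then reduce it to a single classical inequality. For $k\ge 2$ let $t_k$ denote the number of lines spanned by $P$ passing through exactly $k$ of its points; the total number of spanned lines is $\sum_{k\ge 2}t_k$, and a line has at most $3$ points exactly when it is counted by $t_2+t_3$. Thus the lemma is equivalent to
\begin{equation}\label{eq:beck-goal}
t_2+t_3\ \ge\ \sum_{k\ge 4}t_k,
\end{equation}
and in fact I will aim for the stronger bound in which even $t_2$ alone dominates the right-hand side.

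The engine of the proof is Melchior's inequality, the quantitative form of the Sylvester--Gallai theorem coming from Melchior's argument~\cite{Mel41} cited in the introduction: for every noncollinear planar point set,
\begin{equation}\label{eq:beck-melchior}
t_2\ \ge\ 3+\sum_{k\ge 4}(k-3)\,t_k.
\end{equation}
Once \eqref{eq:beck-melchior} is available the lemma is immediate: since $k-3\ge 1$ whenever $k\ge 4$, we get $\sum_{k\ge 4}t_k\le \sum_{k\ge 4}(k-3)t_k\le t_2-3<t_2\le t_2+t_3$, which is even stronger than \eqref{eq:beck-goal}. Adding $t_2+t_3$ to both sides of $t_2+t_3>\sum_{k\ge4}t_k$ shows that strictly more than half of the $\sum_{k\ge2}t_k$ spanned lines pass through at most $3$ points, as claimed (the set being noncollinear, there is at least one line, so the count is nonempty).

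Consequently the whole difficulty lies in \eqref{eq:beck-melchior}, and this is the step I would single out as the main obstacle---though it is classical. I would obtain it by projective duality: map each point of $P$ to a line in the dual projective plane, so that a spanned line through exactly $k$ points becomes a vertex where exactly $k$ of the dual lines cross. Noncollinearity of $P$ guarantees that the $n\ge 3$ dual lines are not all concurrent, so they form a genuine arrangement; writing $V,E,F$ for its numbers of vertices, edges, and faces, one has $V=\sum_{k\ge2}t_k$ and $E=\sum_{k\ge2}k\,t_k$ (a projective line carrying $v$ crossing points is cut into $v$ arcs, and each vertex of multiplicity $k$ lies on $k$ lines). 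Euler's formula in the projective plane, $V-E+F=1$, then yields $F=1+\sum_{k\ge2}(k-1)t_k$. Because no two lines alone can bound a face once the lines are not all concurrent and there are at least three of them, every face has at least three sides, so $3F\le 2E$; substituting the expressions for $E$ and $F$ and simplifying gives exactly \eqref{eq:beck-melchior}. The only points needing care are the use of the projective (rather than planar) Euler characteristic and the justification that every face is at least a triangle, both of which are standard once $n\ge 3$ and the dual lines are not concurrent.
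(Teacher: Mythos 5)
The paper does not prove this lemma at all: it is quoted as a known result of Beck, in the formulation of Payne and Wood \cite[Obs.~15]{PW14}, so there is no internal proof to compare yours against. Your argument is correct, and it is in fact the classical derivation used in the cited sources. Writing $t_k$ for the number of lines through exactly $k$ points of $P$, Melchior's inequality $t_2 \ge 3 + \sum_{k\ge 4}(k-3)\,t_k$ gives
\[
t_2 + t_3 \;\ge\; t_2 \;>\; \sum_{k\ge 4}(k-3)\,t_k \;\ge\; \sum_{k\ge 4} t_k ,
\]
so the lines with at most three points strictly outnumber the rest, which is even stronger than the stated ``at least half.'' Your proof of Melchior's inequality is also the standard one going back to Melchior~\cite{Mel41}: under projective duality, noncollinearity of $P$ translates exactly into non-concurrency of the $n\ge 3$ dual lines; the vertex and edge counts $V=\sum_{k\ge2}t_k$ and $E=\sum_{k\ge2}k\,t_k$ are right (a projective line, being a topological circle, is cut into $v$ arcs by $v\ge 1$ vertices, and every dual line carries at least one vertex since any two lines meet); Euler's formula $V-E+F=1$ in the projective plane and $3F\le 2E$ then yield the inequality after routine algebra. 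The two points you flag as needing care are indeed the only delicate ones, and both hold: the Euler characteristic of $\mathbb{RP}^2$ is $1$, and no face can be a digon once the lines are not all concurrent, because two distinct projective lines meet in a single point, while a two-sided face would require two. So your write-up is a complete, self-contained proof of a statement the paper merely cites.
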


For the proofs of Theorems~\ref{thm:n-t} and~\ref{thm:n/2-t}, we need four other statements concerning incidences
between points and lines, due to Langer, Payne and Wood, and de Zeeuw. All of them are refinements of some technical lemmas
from Beck's paper~\cite{Be83}; see also~\cite{Han17,PP17}.

\begin{lemma} \label{lem:langer} {\rm (Langer~\cite[Prop.~11.3.1]{La03}).}
  Given $n$ points in the plane, with at most $2n/3$ collinear, the number of incidences
  between these points and their connecting lines is at least $n(n+3)/3$.
\end{lemma}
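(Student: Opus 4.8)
The plan is to recast the claim as a statement about the average number of connecting lines through a point. Let the sum range over all connecting lines, let $m_\ell$ be the number of points on $\ell$, let $I=\sum_\ell m_\ell$ be the number of incidences, and let $d_q$ be the number of connecting lines through a point $q$, so that $I=\sum_q d_q$. Counting point pairs gives $\sum_\ell\binom{m_\ell}{2}=\binom{n}{2}$, hence $\sum_\ell m_\ell^2=n(n-1)+I$ and therefore $\sum_\ell m_\ell(4-m_\ell)=3I-n(n-1)$. Consequently the target $I\ge n(n+3)/3$ is \emph{equivalent} to the weighted inequality $\sum_\ell m_\ell(4-m_\ell)\ge 4n$, i.e.\ to the assertion that the average of $d_q$ is at least $(n+3)/3$. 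In this form $2$- and $3$-point lines contribute positively while lines with at least $5$ points contribute negatively, so the entire content is that the light lines dominate the heavy ones by a linear margin. I would stress at the outset that this cannot be extracted from the pair identity and Melchior's inequality alone (a short linear-programming computation with only those constraints yields merely a \emph{linear} lower bound), so genuine incidence geometry must enter.

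The geometric engine is the visibility bound: if $q\notin\ell$, then $q$ sees the $m_\ell$ points of $\ell$ along $m_\ell$ distinct lines, whence $d_q\ge m_\ell$, and in particular $d_q\ge\max_{\ell\not\ni q}m_\ell$. Let $m=\max_\ell m_\ell$ be the size of a richest line $\ell^*$; since two lines with at least $2n/3$ points would cover more than $n$ points, $\ell^*$ is the unique line of size $\ge 2n/3$, and each of the $n-m$ points off $\ell^*$ already lies on at least $m$ lines. I would then split on $m$. For $m\le 3$ the inequality $\binom{n}{2}=\sum_\ell\binom{m_\ell}{2}\le\frac{m-1}{2}\,I$ gives $I\ge\frac{n(n-1)}{m-1}\ge n(n+3)/3$ directly once $n\ge 9$. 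For $m$ close to $2n/3$ the off-line contribution $(n-m)m$ supplies the bulk of the estimate, to be completed by the incidences on $\ell^*$ itself. For the intermediate range I would invoke Beck's Lemma~\ref{lem:beck2}, that at least half of the connecting lines are light, together with the Szemer\'edi--Trotter bound on the number of $k$-rich lines, to force a positive fraction of the $\binom{n}{2}$ pairs onto $2$- and $3$-point lines---exactly the positive side of the weighted inequality. This already yields $I=\Omega(n^2)$.

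To pin down the sharp constant $1/3$ rather than an unspecified one, I would run an induction on $n$. Deleting a point $q$ destroys the $d_q$ incidences on lines through $q$ and, in addition, the incidence at the other endpoint of each ordinary line through $q$; hence $I(n)=I(n-1)+d_q+t_2(q)$, where $t_2(q)$ is the number of ordinary lines through $q$. Comparing the target values $n(n+3)/3$ and $(n-1)(n+2)/3$, the inductive step requires a deletable point with $d_q+t_2(q)\ge 2(n+1)/3$. Because the hypothesis ``at most $2n/3$ collinear'' survives the deletion only when the removed point lies on $\ell^*$, the step must be performed on the richest line, and this is where the visibility bound is used in the opposite direction: all but a bounded number of points of $\ell^*$ see the $n-m$ off-line points in many distinct directions, most of which are ordinary lines, supplying the required quantity $d_q+t_2(q)$.

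The main obstacle is precisely this last point at the extremal boundary $m\approx 2n/3$, where the inductive increment is essentially tight. There the off-line points, seen from a point of $\ell^*$, could in principle pile up onto a few heavy connectors, making both $d_q$ and $t_2(q)$ small; one must show that an adversary cannot arrange this simultaneously for \emph{all} points of $\ell^*$. The right tool is a double count of the pairs (off-line point, point of $\ell^*$): the connectors that make $d_q$ small for one point of $\ell^*$ necessarily spread the off-line points into many directions as seen from the other points of $\ell^*$, so the residual near-extremal configurations are essentially two lines carrying almost all the points and can be treated directly. Controlling $t_2(q)$ in the adversarial case, and thereby securing the exact constant and the additive linear term together with the base cases, is where the real work lies.
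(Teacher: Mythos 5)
This lemma is not proved in the paper at all: it is quoted from Langer~\cite{La03}, where it is obtained from the logarithmic Miyaoka--Yau inequality (orbifold Euler numbers), an algebraic-geometric argument valid over $\mathbb{C}$; the bound is in fact attained there by the Hesse configuration ($n=9$, twelve $3$-point lines, $I=36=n(n+3)/3$), and de Zeeuw~\cite{Zee18} points out that no combinatorial proof is known. So you are attempting something genuinely open, and your text itself concedes that the decisive step --- producing a deletable point with $d_q+t_2(q)\ge 2(n+1)/3$ in the regime where the richest line has close to $2n/3$ points --- is ``where the real work lies.'' That step is not a detail; it is the whole theorem. The parts you do carry out (the correct algebraic reformulation, the case $m\le 3$ via pair counting, and $I=\Omega(n^2)$ in the middle range via Lemma~\ref{lem:beck2} and Szemer\'edi--Trotter) cannot yield the constant $1/3$, because the constants in those tools are far too weak.

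Worse, the inductive step as you designed it is not merely unproven but false. Take $n\ge 15$ divisible by $3$, place $n/3$ points in general position off a line $\ell^*$, and choose the $2n/3$ points of $X\cap\ell^*$ among the $\binom{n/3}{2}$ distinct points where connecting lines of the off-line points cross $\ell^*$, one point per connecting line. Then at most $2n/3$ points of $X$ are collinear, but every $q\in X\cap\ell^*$ lies on exactly one $3$-point line ($q$ together with two off-line points), so $d_q=1+1+(n/3-2)=n/3$ and $t_2(q)=n/3-2$, giving $d_q+t_2(q)=2n/3-2<(2n+2)/3$, the increment your induction needs. Since here $m=2n/3>2(n-1)/3$, your own constraint forces the deleted point to lie on $\ell^*$, so the induction cannot proceed from this configuration --- which satisfies Langer's bound with plenty of room ($I\approx 5n^2/9$), so it defeats the induction, not the lemma. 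Your sketched double count does not rescue this: the deficiency is small but is suffered \emph{simultaneously} by every point of $\ell^*$, and the configuration is one line plus a generic cloud, not ``essentially two lines,'' so it is not covered by your residual case either. Any repair would need a substantially stronger induction hypothesis or a different mechanism altogether; as it stands, the proposal has a fatal gap exactly at its core.
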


\begin{lemma} \label{lem:payne-wood} {\rm (Payne and Wood~\cite[Thm.~12]{PW14}).}
  Let $X$ be a set of $n$ points in the plane. If at most $m$ points of $X$ are collinear,
  then $X$ determines at least $\frac{1}{98} n(n-m)$ distinct connecting lines.
\end{lemma}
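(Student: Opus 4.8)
The plan is to prove the slightly stronger statement in which $m$ is replaced by the \emph{actual} maximum number $\mu$ of collinear points of $X$; since $\mu\le m$ we have $\tfrac1{98}n(n-\mu)\ge\tfrac1{98}n(n-m)$, so it suffices to produce $\tfrac1{98}\,n\,(n-\mu)$ distinct lines. I would split the argument according to whether a single line already captures a large fraction of the points. Fixing a threshold $\alpha$ slightly above $0.341$, I distinguish the \emph{concentrated} case $\mu\ge\alpha n$, handled by an elementary charging argument, and the \emph{spread} case $\mu<\alpha n$, handled by a quantitative Szemer\'edi--Trotter/Beck bound.

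Concentrated case. Let $L$ be a line carrying $\mu$ points of $X$, and let $Y=X\setminus L$, so $|Y|=n-\mu=:s$. Every line other than $L$ meets $L$ in at most one point, so each line joining a point $q\in L\cap X$ to some point of $Y$ is counted exactly once when we sum, over $q\in L\cap X$, the number $g_q$ of distinct lines through $q$ that contain a point of $Y$. Writing $Y$ as the disjoint union of its maximal subsets collinear with $q$, of sizes $a^{(q)}_1,\dots,a^{(q)}_{g_q}$, we get $g_q=s-\sum_i(a^{(q)}_i-1)$ and hence
\[
 N \;\ge\; \sum_{q\in L\cap X} g_q \;=\; \mu s-\sum_{q}\sum_i\bigl(a^{(q)}_i-1\bigr).
\]
The key point is that a pair of points of $Y$ is collinear with at most one point of $L$, because the line through that pair meets $L$ only once; using $a-1\le\binom{a}{2}$ this yields $\sum_q\sum_i(a^{(q)}_i-1)\le\binom{s}{2}$, so that
\[
 N \;\ge\; \mu s-\binom{s}{2}\;=\;s\cdot\frac{2n-3s+1}{2}.
\]
A short computation shows that for $s\le\tfrac{97}{147}n$ (equivalently $\mu\ge0.341n$) the right-hand side is at least $\tfrac1{98}ns\ge\tfrac1{98}n(n-m)$, which settles this regime with room to spare.

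Spread case. When $\mu<\alpha n$, the target $\tfrac1{98}n(n-m)\le\tfrac1{98}n^2$ is only a constant fraction of $n^2$, and here I would invoke the Szemer\'edi--Trotter theorem~\cite{SzT83} (equivalently Beck's theorem~\cite{Be83}): a set of $n$ points with at most a fixed fraction of them collinear spans $\Omega(n^2)$ distinct lines. Concretely, bounding the number of pairs on lines with at least $k$ points by the dyadic sum of the incidence estimates $O(n^2/k^3+n/k)$ and subtracting this from the identity $\sum_\ell\binom{k_\ell}{2}=\binom{n}{2}$ leaves $\Omega(n^2)$ pairs on lines of bounded multiplicity, each of which contributes a distinct line; one then checks that the resulting constant exceeds $\tfrac1{98}$ throughout $\mu<0.341n$.

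The main obstacle is the constant. The concentrated case is comfortable, but the value $\tfrac1{98}$ is really dictated by the spread case near the threshold $\mu\approx0.341n$: there one needs the Szemer\'edi--Trotter bound with an explicit constant (e.g. via the crossing-number lemma) together with a careful dyadic summation, since a crude application loses a factor that makes the estimate vacuous once $\mu$ is a large constant fraction of $n$. Choosing the threshold $\alpha$ and the incidence constant so that the two regimes overlap and both deliver at least $\tfrac1{98}n(n-m)$ is the delicate bookkeeping step, and it is precisely this optimization that produces the value $98$.
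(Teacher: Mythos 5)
First, a point of comparison: the paper does not prove this lemma at all --- it is quoted as an external result of Payne and Wood~\cite[Thm.~12]{PW14} --- so your attempt can only be measured against the published proof. Your \emph{concentrated} case ($\mu\ge \tfrac{50}{147}n\approx 0.34n$) is correct and complete, and is a genuinely elementary argument: every line joining a point of $L$ to a point of $Y$ meets $L$ exactly once, the identity $g_q=s-\sum_i(a_i^{(q)}-1)$ holds, a pair of points of $Y$ is collinear with at most one point of $L$ (so $\sum_q\sum_i(a_i^{(q)}-1)\le\sum_q\sum_i\binom{a_i^{(q)}}{2}\le\binom{s}{2}$), and the computation $\mu s-\binom{s}{2}=s(2n-3s+1)/2\ge ns/98$ for $s\le 97n/147$ checks out exactly.

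The \emph{spread} case, however, is a genuine gap, and it is where the entire content of the lemma lies. The route you sketch --- dyadic summation of the explicit Szemer\'edi--Trotter bound ``at most $O(n^2/k^3+n/k)$ lines with $\ge k$ points,'' subtracted from $\sum_\ell\binom{k_\ell}{2}=\binom{n}{2}$ --- fails as described, not merely in its constants. The culprit is the $n/k$ term: a dyadic scale $k$ contributes up to about $(n/k)\cdot k^2=nk$ pairs from it, and summing over scales up to the maximum multiplicity $\mu$, which in your regime can be as large as $0.34n$, yields on the order of $n\mu=\Theta(n^2)$ pairs; with any honest explicit constant (the crossing-lemma version gives roughly $3n/k$ such lines) this already exceeds $\binom{n}{2}$, so the subtraction leaves nothing. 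Repairing this requires an additional mechanism --- e.g., using that two lines share at most one point to cap the pair-coverage of lines with far more than $\sqrt{n}$ points --- and even then, extracting a constant as strong as $1/98$ while allowing $34\%$ of the points on one line is precisely the hard part of Payne--Wood's theorem, not a routine check. Indeed, the published proof does not use crossing-lemma Szemer\'edi--Trotter at all: Payne and Wood obtain the constant $98$ from Langer's Hirzebruch-type inequality, the very inequality quoted as Lemma~\ref{lem:langer} in this paper and underlying de Zeeuw's refinements~\cite{Zee18} such as Lemma~\ref{lem:de-zeeuw2}. You flag this step yourself as ``the delicate bookkeeping'' that ``produces the value $98$,'' but since that step is left unperformed and is unattainable by the method as sketched, the proposal does not establish the lemma.
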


\begin{lemma} \label{lem:de-zeeuw1} {\rm (de Zeeuw~\cite[Thm.~2.1]{Zee18}).}
Given a set $X$ of $n$ points in the plane, at least one of the following holds:
\begin{enumerate}
\item Some connecting line is incident to at least $\gamma n$ points in $X$, where
  $\gamma= (6+\sqrt3)/9= 0.859\ldots$.
\item There are at least $n^2/9$ lines spanned by $X$.
  \end{enumerate}
\end{lemma}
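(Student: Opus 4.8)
The plan is to prove the contrapositive formulation: assuming that no connecting line passes through more than $\gamma n$ points, I will show that $X$ spans at least $n^2/9$ lines. (If some line does carry at least $\gamma n$ points, alternative~(1) holds and there is nothing to prove.) Throughout, write $p_\ell$ for the number of points of $X$ on a line $\ell$, let $L$ denote the number of connecting lines, and let $I=\sum_\ell p_\ell$ be the total number of point--line incidences. The starting point is the exact identity $\sum_\ell \binom{p_\ell}{2}=\binom{n}{2}$, expressing that every pair of points lies on a unique connecting line.

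The argument rests on two levers. The first is the identity above together with the ceiling $p_\ell\le m:=\lfloor\gamma n\rfloor$ on the size of every line. The second is a lower bound on the incidence count $I$: when at most $2n/3$ points are collinear, Langer's inequality (Lemma~\ref{lem:langer}) already gives $I\ge n(n+3)/3$, and I would extend this into the range $2n/3<m<\gamma n$ by a direct argument on the richest line $\ell_0$, using that each of the $n-m$ points off $\ell_0$ is joined to the $m$ points of $\ell_0$ by $m$ distinct lines. These two levers are then combined by a convexity (equivalently, linear-programming) argument: among all distributions of the $\binom{n}{2}$ pairs over lines of size at most $m$ subject to a prescribed incidence budget $I$, the number of lines is smallest when the pairs are concentrated on lines of the extreme admissible sizes. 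This yields a closed-form lower bound on $L$ as a function of $n$, $m$, and $I$.

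It then remains to substitute the incidence bound and optimize. Setting $\alpha=m/n$ and comparing the resulting lower bound on $L$ with the target $n^2/9$ produces a quadratic condition in $\alpha$, namely $27\alpha^2-36\alpha+11\le 0$; its larger root is exactly $\gamma=(6+\sqrt3)/9$, so for $\alpha<\gamma$ the bound strictly exceeds $n^2/9$. Tightness (and hence the optimality of the constants) would be certified by exhibiting a near-extremal configuration --- a single rich line of about $\gamma n$ points together with a small, carefully structured off-line point set --- for which both the maximum collinearity $\gamma n$ and the count $n^2/9$ are essentially attained.

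The main obstacle is the sharpness required in the two quantitative inputs. A crude application of Cauchy--Schwarz to pass from $I$ to $L$ is too lossy --- it yields only about $n^2/12$ --- so the convexity step must be carried out in its sharp linear-programming form, with the extremal line sizes identified exactly. Equally delicate is the incidence lower bound in the regime $2n/3<m<\gamma n$, which lies beyond the reach of Langer's inequality and must be established separately; it is precisely the point at which this bound degrades that fixes the threshold $\gamma$, so the constants in both steps have to be tracked with care for the quadratic, and thus the value $(6+\sqrt3)/9$, to emerge.
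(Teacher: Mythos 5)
The paper itself offers no proof of this lemma---it is quoted from de Zeeuw~\cite{Zee18}---so your proposal must stand on its own, and its central step fails. The ``sharp linear-programming'' conversion from the incidence budget to a line count provably cannot reach $n^2/9$: the only constraints you impose on the size distribution $(t_k)$, where $t_k$ is the number of lines with $k$ points, are $\sum_k \binom{k}{2}t_k=\binom{n}{2}$, $\sum_k k\,t_k\ge n(n+3)/3$, and $k\le m$. But the distribution $t_3=4n/3$, $t_4=n(n-9)/12$ (all other $t_k=0$) satisfies the pair identity exactly, meets Langer's incidence bound exactly, respects any cap $m\ge 4$, and has only $n(n+7)/12\approx n^2/12$ lines. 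Hence the LP optimum is $\approx n^2/12$, i.e.\ Cauchy--Schwarz is already essentially sharp for your constraint set; contrary to your claim, the minimizing vertex sits at the near-uniform sizes $\{3,4\}$ (where the incidence-per-pair ratio exactly matches the budget), not at the ``extreme admissible sizes'' $\{2,m\}$. Since the obstruction is a \emph{feasible point}, no amount of care in carrying out the convexity step can close the gap from $n^2/12$ to $n^2/9$. The lemma survives only because this distribution is not realizable by an actual planar point set: Melchior's inequality (equivalently, a strong form of Sylvester--Gallai), $t_2\ge 3+\sum_{k\ge 4}(k-3)t_k$, forbids configurations with no $2$-point lines---exactly the kind of real-geometric input your purely aggregate counting never invokes.

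There is also an internal inconsistency which, ironically, points to the correct argument. Your quadratic $27\alpha^2-36\alpha+11\le 0$ holds only for $\alpha\in[(6-\sqrt3)/9,\,(6+\sqrt3)/9]$, so even granting your LP bound it would say nothing for $\alpha<(6-\sqrt3)/9\approx 0.47$, where your machinery still tops out at $n^2/12<n^2/9$. The quadratic does arise naturally, but from an elementary count relative to a rich line rather than from any incidence LP: if $\ell_0$ carries $a=\alpha n$ points and $b=n-a$ points lie off it, then every other line meets $\ell_0\cap X$ in at most one point, covers $q$ cross pairs and $\binom{q}{2}$ off-line pairs, and since $1\ge q-\binom{q}{2}$ for every $q\ge 1$,
\begin{equation*}
L \;\ge\; 1+ab-\binom{b}{2}\;=\;\frac{(1-\alpha)(3\alpha-1)}{2}\,n^2-O(n),
\end{equation*}
and $(1-\alpha)(3\alpha-1)/2\ge 1/9$ is precisely your quadratic; this handles all $\alpha\in[(6-\sqrt3)/9,\gamma]$, in particular the whole range $2/3<\alpha\le\gamma$ where Langer's inequality is unavailable. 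The complementary regime $\alpha\le 2/3$ is where Langer's bound enters, but, as shown above, it must be combined with realizability input such as Melchior's inequality (or applied to suitable subsets) to get past $n^2/12$. Your tightness intuition is sound---the extremal configuration is indeed $\gamma n$ points on a line plus $(1-\gamma)n$ points placed (e.g.\ on a conic) so that almost all cross lines carry three points, giving $\approx n^2/9$ lines---but the mechanism you propose for the lower bound is the part that cannot work, and the mechanism is the proof.
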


\begin{lemma} \label{lem:de-zeeuw2} {\rm (de Zeeuw~\cite[Cor.~3.2]{Zee18}).}
  Let $X$ be a set of $n$ points in the plane, with at most $2n/3$ collinear
  and let $\L$ be the set of lines spanned by $X$.

  Then for any $k \geq 5$, the number of lines in $\L$ incident to at least $k$ points of $X$ is at most
  $4|\L|/(k-2)^2$.
\end{lemma}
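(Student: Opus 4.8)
The plan is to establish the bound through a single weighted count over all connecting lines, with Langer's inequality (Lemma~\ref{lem:langer}) supplying the crucial lower bound on the number of incidences. For each integer $i\ge 2$, let $t_i$ be the number of lines of $\L$ incident to exactly $i$ points of $X$. I would begin by recording three elementary facts: $\sum_{i\ge 2} t_i=|\L|$; the pair count $\sum_{i\ge 2}\binom{i}{2}t_i=\binom{n}{2}$, valid because every one of the $\binom{n}{2}$ point pairs lies on exactly one connecting line; and, writing $I=\sum_{i\ge 2} i\,t_i$ for the total number of point--line incidences, the bound $I\ge n(n+3)/3$, which Lemma~\ref{lem:langer} provides precisely because at most $2n/3$ points of $X$ are collinear.

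The heart of the argument is to weight each line by $(i-2)^2$ and bound $S:=\sum_{i\ge 2}(i-2)^2 t_i$ from above. Expanding $(i-2)^2=i^2-4i+4$ and substituting $\sum_{i} i^2 t_i = 2\sum_i \binom{i}{2} t_i + \sum_i i\, t_i = 2\binom{n}{2}+I$, one finds $S = 2\binom{n}{2} - 3I + 4|\L| = n(n-1) - 3I + 4|\L|$. Feeding in the Langer bound $I\ge n(n+3)/3$ makes the two $\Theta(n^2)$ contributions cancel, leaving $S \le n(n-1) - n(n+3) + 4|\L| = 4|\L| - 4n \le 4|\L|$. The conclusion is then immediate: every line counted in the statement has $i\ge k$, so $(i-2)^2\ge(k-2)^2>0$ for each of them (for which $k\ge 5$ is amply sufficient), and hence, denoting by $N$ the number of lines of $\L$ incident to at least $k$ points, $(k-2)^2 N \le \sum_{i\ge k}(i-2)^2 t_i \le S \le 4|\L|$, which is exactly $N\le 4|\L|/(k-2)^2$.

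The single point that requires foresight, and the step I expect to be the only genuine obstacle, is the choice of the quadratic weight $(i-2)^2$. More generally, weighting by $(i-a)^2$ turns the upper bound into $n(n-1)+(1-2a)I+a^2|\L|$, and after inserting $I\ge n(n+3)/3$ the coefficient of $n^2$ becomes $(4-2a)/3$; this vanishes exactly when $a=2$, the minimum possible size of a connecting line. Any other shift would leave an uncanceled positive $\Theta(n^2)$ term and destroy the clean dependence on $|\L|$. Once $a=2$ is identified, everything else is routine bookkeeping, and the hypothesis ``at most $2n/3$ collinear'' enters only through its role in licensing Langer's inequality.
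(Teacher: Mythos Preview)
Your proof is correct. The paper itself does not supply a proof of this lemma; it is quoted from de~Zeeuw~\cite[Cor.~3.2]{Zee18} in the preliminaries, and your argument is precisely the one given there: weight each line by $(i-2)^2$, expand, and let Langer's inequality (Lemma~\ref{lem:langer}) cancel the $n^2$ terms to leave $\sum_{i\ge 2}(i-2)^2 t_i \le 4|\L|-4n\le 4|\L|$. Your side remark that the hypothesis $k\ge 5$ is ``amply sufficient'' is apt---the computation goes through for any $k\ge 3$.
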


For the algorithmic part, we use the following classical result on triangle detection and triangle counting.

\begin{lemma}\label{itai} {\rm (Itai and Rodeh~\cite{IR78}.)}
Deciding whether a graph contains a triangle and finding one if it does, or counting all triangles in a graph, can be
done in $O(n^\omega)$ time, where $\omega$ is the infimum of all values $\tau$ 
such that two $n \times n$ real matrices can be multiplied in $O(n^\tau)$ time.
\end{lemma}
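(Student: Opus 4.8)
The plan is to reduce all three tasks to a constant number of $n\times n$ matrix multiplications. Let $A\in\{0,1\}^{n\times n}$ be the adjacency matrix of the graph (symmetric, since the graph is undirected). The basic identity is that $(A^2)_{ij}$ counts the common neighbours of $i$ and $j$, so $(A^3)_{ii}=\sum_{j,k}A_{ij}A_{jk}A_{ki}$ counts the closed walks of length $3$ based at $i$, and each triangle through $i$ contributes exactly two such walks (its two cyclic orientations). Summing over $i$ and noting that every triangle is based at three of its vertices, the total number of triangles equals $\tfrac{1}{6}\operatorname{tr}(A^3)$.

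Counting is then immediate: first I would compute $A^2=A\cdot A$ and then $A^3=A\cdot A^2$ using two matrix multiplications in $O(n^\omega)$ time, and read off $\tfrac{1}{6}\operatorname{tr}(A^3)$ with $O(n)$ additional work. In fact a single multiplication suffices, since the count also equals $\tfrac{1}{6}\sum_{i,j}A_{ij}(A^2)_{ij}$, a Frobenius inner product computable in $O(n^2)$ time once $A^2$ is known. Because the graph has at most $n$ vertices, every intermediate entry is a nonnegative integer bounded by $n$, so exact arithmetic over the reals incurs no loss and the bound phrased in terms of real matrix multiplication applies verbatim. Detection is the special case of deciding whether $\operatorname{tr}(A^3)>0$, equivalently whether $(A^2)_{ij}\ge 1$ for some edge $\{i,j\}$.

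The one step that requires a little more care — and the part I expect to be the main (if mild) obstacle — is \emph{finding} an explicit triangle within the same time budget, since the trace only certifies existence. After computing $A^2$, I would scan the $O(n^2)$ pairs $(i,j)$ to locate a single edge with $A_{ij}=1$ and $(A^2)_{ij}\ge 1$; such a pair must exist whenever a triangle does, and by definition it has a common neighbour completing a triangle. I would then recover an actual witness $k$ by one linear scan intersecting the neighbour lists of $i$ and $j$, costing only $O(n)$ since it is performed just once. All of these auxiliary steps are dominated by the $O(n^\omega)$ cost of the matrix multiplication, yielding the stated running time for detection, finding, and counting alike.
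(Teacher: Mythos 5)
Your proposal is correct and is essentially the standard argument behind the cited result: the paper itself gives no proof of this lemma (it is quoted from Itai and Rodeh), and their approach is exactly the one you describe, namely squaring the adjacency matrix, detecting a triangle via an edge $\{i,j\}$ with $(A^2)_{ij}\ge 1$, recovering a witness by a neighbour-list scan, and counting via $\tfrac16\operatorname{tr}(A^3)$. Your additional observations (one multiplication suffices, and all intermediate entries are integers bounded by $n^2$, so the real-matrix-multiplication model applies) are accurate and complete the argument within the stated $O(n^\omega)$ bound.
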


The best known upper bound for $\omega$ is roughly $2.372$; see ~\cite{DWZ23,VXXZ24}.
The significance of the asymptotically fastest known algorithms for matrix multiplication is mainly  theoretical: most
of them are impractical~\cite[Ch.~10.2.4]{Man89}. On the other hand, no combinatorial algorithm for
triangle detection that runs in truly subcubic time is currently known~\cite{AFK+24}.

\section{Proofs of Theorems~\ref{thm:n-t} and~\ref{thm:n/2-t}}   \label{sec:proofs}

We start by giving two very similar constructions of point sets with many collinear points that cannot be covered by $k$
lines, for a fixed $k\ge 2$, but determine only a small number of $c$-ordinary triangles. The first construction proves
Proposition~\ref{prop:k1}. 

\subparagraph{Proof of Proposition~\ref{prop:k1}.}
Let $n \geq 2k+c$, and place $n-2k+1$ arbitrary points on the $x$-axis. Pick the remaining $2k-1$ points in such a way
that no $3$ of them are collinear, and none of the ${2k-1 \choose 2}$ lines spanned by them 
passes through any of the points selected on the $x$-axis; see Fig.~\ref{fig:f1}.
Let $X$ denote the resulting $n$-element set and $Y$ the set of $2k-1$ points not on the $x$-axis
($Y \subset X$).

\begin{figure}[htbp]
 \centering
 \includegraphics{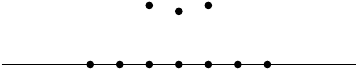}
 \caption{A counterexample to superlinearity (here $n=10$ and $k=2$).}
 \label{fig:f1}
\end{figure}

Every $c$-ordinary triangle must have at least two vertices in $Y$, and the number of such triangles is at most
\[ {2k-1 \choose 2} n < 2k^2 n. \]
Indeed, a $c$-ordinary triangle cannot have two vertices on the $x$-axis, because then that side  is not $c$-ordinary,
as $n-(2k-1) \geq c+1$. On the other hand, it is easy to see that the minimum number of lines that cover $X$ is $k+1$.
\qed
\smallskip

The following slight variation of the above construction shows that the order of magnitude of the lower bound in
Theorem~\ref{thm:n-t} cannot be improved, even if we only require that our point set cannot be covered by $k$ lines, for
any fixed $k\ge 2$. 

\begin{proposition} \label{prop:k2}
  Let $t(n) \leq 0.1 n$ be a function of $n$ with $\lim_{n \to \infty} t(n) =\infty$, and let $k, c \geq 2$ be fixed integers.

 Then there exists an $n$-element point set $X$ in the plane with at most $n-t(n)$ points on a line such that $X$ cannot
 be covered by $k$ lines, but the number of $c$-ordinary triangles spanned by $X$ is only $O(n \cdot t(n)).$ 
\end{proposition}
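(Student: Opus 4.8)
The plan is to modify the construction used for Proposition~\ref{prop:k1} by keeping a small set of points in general position, which will block a $k$-line cover, but replacing the rest of the off-axis points by a \emph{heavy} auxiliary line carrying $\Theta(t(n))$ points. Concretely, I place $n-t(n)$ points on the $x$-axis $\ell_0$ (the set $A$); I place $t(n)-(2k-1)$ points on a second line $\ell_1$ parallel to $\ell_0$ (the set $B$, so that $\ell_1$ meets no point of $\ell_0$); and I place the remaining $2k-1$ points off $\ell_0\cup\ell_1$ in general position, forming a set $G$ chosen generically so that the only lines spanned by $X$ that contain more than two points are $\ell_0$ and $\ell_1$. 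Since $t(n)\to\infty$ and $t(n)\le 0.1n$, for all sufficiently large $n$ the line $\ell_0$ carries the maximum number $n-t(n)$ of collinear points, so the hypothesis ``at most $n-t(n)$ collinear'' holds, and $\ell_1$ carries more than $c$ points.

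Next I verify that $X$ cannot be covered by $k$ lines. Any family of at most $k$ lines covering $X$ must contain $\ell_0$: a line distinct from $\ell_0$ meets $\ell_0$ in at most one point, so without $\ell_0$ one would need at least $n-t(n)\gg k$ lines just for the axis. The same argument forces $\ell_1$ into the cover, since $\ell_1$ carries $t(n)-(2k-1)>k$ points for large $n$ while $\ell_0$ (being parallel) and every other line meet $\ell_1$ in at most one point. But $\ell_0\cup\ell_1$ misses all of $G$, and $2k-1$ points in general position require $\lceil(2k-1)/2\rceil=k$ further lines, whereas at most $k-2$ remain; hence the total exceeds $k$.

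The core of the argument is the triangle count. Because $\ell_0$ and $\ell_1$ are, once $n$ is large, the only lines incident to more than $c$ points, a triangle spanned by $X$ is $c$-ordinary if and only if no side lies on $\ell_0$ or on $\ell_1$, that is, if and only if it has at most one vertex in $A$ and at most one vertex in $B$. Thus the vertex distribution $(|A|\text{-part},|B|\text{-part},|G|\text{-part})$ of a $c$-ordinary triangle is one of $(1,1,1)$, $(1,0,2)$, $(0,1,2)$, $(0,0,3)$. Summing these, the dominant contribution is the $(1,1,1)$ term $|A|\,|B|\,|G|=(n-t(n))(t(n)-2k+1)(2k-1)=O(n\cdot t(n))$, while every remaining term is $O(n)$ or smaller; this gives the asserted bound.

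The point I expect to require the most care is the genericity of the placement: I must check that the finitely many families of unwanted incidences (three points of $G$ collinear, or a line meeting two of $A,B,G$ in one point each, or $G$ in two points, yet passing through a third point of $X$) can all be avoided simultaneously by a suitable choice of $\ell_1$ and $G$, so that indeed no line other than $\ell_0,\ell_1$ carries three points. The conceptual crux, by contrast, is the decision to concentrate the extra $t(n)$ points on a single heavy line $\ell_1$: had they been placed in general position, every pair of them would span a light line and the triangles of type $(1,1,1)$ would already number $\Theta(n\cdot t(n)^2)$, overshooting the target, whereas forcing these pairs onto $\ell_1$ is exactly what caps the count at $O(n\cdot t(n))$ and matches the lower bound of Theorem~\ref{thm:n-t}.
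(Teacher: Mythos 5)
Your proposal is correct and essentially identical to the paper's construction: the paper likewise places $n-t(n)$ points on $\ell_0$, roughly $t(n)$ points on a second line $\ell_1$, and a constant number of generic points off both lines, then bounds the $c$-ordinary triangles by noting each can have at most one vertex on $\ell_0$ and at most one on $\ell_1$. The only (immaterial) differences are that the paper uses $2k-3$ generic points rather than your $2k-1$, and it states the genericity and counting steps more tersely than your case analysis by vertex distribution.
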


\subparagraph{Proof of Proposition~\ref{prop:k2}.}
Since $\lim_{n \to \infty} t(n) =\infty$, we may assume that $t(n) \geq 2k+c-2$, provided that $n$ is sufficiently large.
Select $n-t(n)$ points arbitrarily on the line $\ell_0: y=0$.
Pick another $t(n)-2k+3$ points on the line $\ell_1: y=1$, and the remaining
$2k-3$ points not on these lines and in such a way that the only collinear triples
are contained in one of the lines $\ell_i, i=0,1$; see Fig.~\ref{fig:f2}.
Let $X$ denote the resulting $n$-element set and $Y$ the set of $2k-3$ points not on
$\ell_0$ or $\ell_1$ ($Y \subset X$). Clearly, $X$ cannot be covered by $k$ lines.

\begin{figure}[htbp]
 \centering
 \includegraphics[scale=0.9]{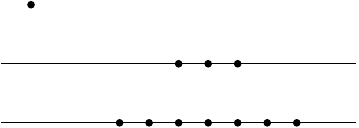}
 \caption{A set of $11$ points spanning $21$ ordinary triangles ($n=11$ and $k=2$).}
 \label{fig:f2}
\end{figure}

Every $c$-ordinary triangle must have at least one vertex in $Y$, and at most one vertex
on each of the lines $\ell_0$ and $\ell_1$.
Indeed, a $c$-ordinary triangle cannot have two vertices on $\ell_0$ or on $\ell_1,$ since these lines
are not $c$-ordinary, because $n -t(n) \geq c+1$ and $t(n)-2k+3 \geq c+1$. Therefore,
$X$ spans at most $(2k-3) \cdot n \cdot t(n) = O(n \cdot t(n))$ $c$-ordinary triangles.
\qed
\smallskip

Denote by $\ell(x,y)$ the line incident to $x$ and $y$, with $x,y \in X$, $x \neq y$.
The following auxiliary lemma is used in the proofs of Theorems~\ref{thm:n-t} and~\ref{thm:n/2-t}.

\begin{lemma} \label{lem:auxiliary}
Let $X$ be a set of $n$ points in the plane. Set $c=17$ and $\alpha=0.2$.
Let $\ell$ be a line incident to at least $\alpha n$ points in $X$, $L = X \cap \ell$ and
$Y= X \setminus L$.  Let $p,q \in Y$ such that $\ell(p,q)$ is $c$-ordinary with respect to $X$.

Then there exist at least $\alpha n/3$ points $r \in L$ such that $p,q,r$ form a $c$-ordinary triangle
in~$X$.
\end{lemma}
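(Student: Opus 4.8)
The plan is to show that only a small number of points $r \in L$ can \emph{fail} to produce a $c$-ordinary triangle, and that this failure count stays well below $|L|$. Recall that $p,q,r$ form a $c$-ordinary triangle exactly when the three points are noncollinear and all three sides $\ell(p,q)$, $\ell(p,r)$, $\ell(q,r)$ are $c$-ordinary. Since $\ell(p,q)$ is $c$-ordinary by hypothesis, for a point $r \in L$ the only ways to fail are: (i) $r$ lies on $\ell(p,q)$, so the triple is collinear; (ii) $\ell(p,r)$ is not $c$-ordinary; or (iii) $\ell(q,r)$ is not $c$-ordinary. I would bound the number of ``bad'' $r$ in each category and subtract from $|L|$.

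Category (i) is immediate: since $p \notin \ell$ we have $\ell(p,q) \neq \ell$, so these two lines meet in at most one point, and hence at most one $r \in L$ is collinear with $p,q$. The heart of the argument is the symmetric pair (ii), (iii); I would treat (ii). The key geometric observation is that, because $p \notin \ell$, \emph{every} line through $p$ meets $\ell$ in at most one point, so it contains at most one point of $L$; in particular the map $r \mapsto \ell(p,r)$ is injective on $L$. Now, any line through $p$ that is not $c$-ordinary contains at least $c+1 = 18$ points of $X$, of which at most one lies in $L$ and one is $p$ itself, so it contains at least $16$ points of $Y \setminus \{p\}$. Distinct lines through $p$ share only $p$, so these $16$-element subsets of $Y \setminus \{p\}$ are pairwise disjoint; summing, the number of $r \in L$ with $\ell(p,r)$ not $c$-ordinary is at most $(|Y|-1)/16$. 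The same bound holds for $q$.

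Combining the three categories, the number of $r \in L$ yielding a $c$-ordinary triangle with $p,q$ is at least
\[
|L| - 2\cdot\frac{|Y|-1}{16} - 1 .
\]
Inserting the hypotheses $|L| \geq \alpha n$ and $|Y| = n-|L| \leq (1-\alpha)n$ with $\alpha = 0.2$, a direct estimate gives a lower bound of roughly $\alpha n - \tfrac{(1-\alpha)n}{8} = 0.1n$, which comfortably exceeds the target $\alpha n/3 \approx 0.067 n$. The precise reason the stated factor $3$ works with $c=17$ and $\alpha=0.2$ is the inequality $2\alpha/3 \geq (1-\alpha)/8$, i.e. $19\alpha \geq 3$; the choices $c=17$ (which produces the denominator $16$) and $\alpha=0.2$ are calibrated to satisfy it with slack to spare.

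The hard part here is not conceptual but arithmetic: verifying that the constants $c=17$ and $\alpha=0.2$ make the final inequality hold with the claimed factor $3$, and absorbing the additive ``$-1$'' from category (i) (so that the clean bound holds for $n$ sufficiently large). The genuine content of the lemma — that lines through a fixed point off $\ell$ split $L$ into singletons, and that a non-$c$-ordinary line of this kind must be ``expensive'' in points of $Y$, forcing such lines to be few — is the crux, and it falls out cleanly from the disjointness of lines through a common point.
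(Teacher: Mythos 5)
Your proof is correct and follows essentially the same route as the paper: you bound the sets of bad points $r\in L$ (those collinear with $p,q$, or making $\ell(p,r)$ or $\ell(q,r)$ non-$c$-ordinary) via the disjointness of the point sets carried by distinct lines through $p$ (resp.\ $q$), obtaining the same $\frac{1-\alpha}{c-1}n$-type bounds and the same final arithmetic. The only cosmetic difference is that the paper writes the count as $|X_p|\le\frac{n-|L|}{c-1}$ rather than your marginally sharper $\frac{|Y|-1}{16}$, and both arguments share the same implicit requirement that $n$ be large enough to absorb the additive $-1$, which you in fact flag more explicitly than the paper does.
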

\begin{proof}
Define
\[ X_p = \{x \in L \ \colon \ |\ell(x,p) \cap X| \geq c+1\},
\text{ and }
X_q = \{x \in L \ \colon \ |\ell(x,q) \cap X| \geq c+1\}. \]
By definition, any point of $X_p$ determines a line in $\L$ that is incident to at least $c-1$ points
in $X \setminus (L \cup \{p\})$, and furthermore, these point sets are distinct for each $x \in X_p$.
Hence, we have $(c-1) |X_p| \leq n -|L|$, which gives
\[ |X_p| \leq \frac{n - |L|}{c-1} \leq \frac{1-\alpha}{c-1} n, \]
by the assumption of this first case.
Similarly, we have
\[ |X_q| \leq  \frac{1-\alpha}{c-1} n. \]
Recall that $\alpha = 0.2$ and $c=17$. A straightforward calculation implies that
\[ \alpha - \frac{2(1-\alpha)}{c-1} = \frac{\alpha}{2}, \]
whence there exist at least
\[ |L| - |X_p| - |X_q| -1 \geq \alpha n - \frac{2(1-\alpha)}{c-1} n -1 \geq \frac{\alpha}{3} n \]
points $r \in L$ such that $r \notin (X_p \cup X_q)$ and $p,q,r$ are noncollinear.
Furthermore,  $p,q,r$ form a $c$-ordinary triangle, as required.
\end{proof}

\subparagraph{Proof of Theorem~\ref{thm:n-t}.}
Assume that $n$ is sufficiently large: $n \geq n_0= 10^5$. Set
\begin{equation} \label{eq:c,alpha}
  c=17 \text{ and } \alpha= 0.2.
\end{equation}
Let $\L$ denote the set of lines determined by $X$.
As in the earlier existence proofs~\cite{Dub18,FMN+17}, we distinguish two cases.

\begin{enumerate} [(i)] \itemsep 1pt
\item There is a line $\ell \in \L$ such that $|\ell \cap X| \geq \alpha n$.
\item For every line $\ell \in \L,$ we have $|\ell \cap X| \leq \alpha n$.
\end{enumerate}

\smallskip
\emph{Case} \textbf{(i)}.
Let $L:= \{x \in \ell \cap X\}$ and $Y:= X \setminus L$.
Since $Y:= X \setminus L$ is noncollinear by assumption,
by the result of Kelly and Moser~\cite{KM58},
$Y$ determines $z \geq 3|Y|/7$ ordinary lines (in $Y$), say,
\[ \{p_j,q_j\}, j=1,\ldots,z. \]
Note that $|Y| \geq t(n)$, thus $z =\Omega(t(n))$.
Note also that every ordinary line in $Y$ is $3$-ordinary in~$X$.
By Lemma~\ref{lem:auxiliary}, each pair $\{p_j,q_j\}$ occurs in at least $|L|/3 = \Omega(n)$ $c$-ordinary triangles
with the third vertex on $\ell$. Since triangles from different pairs are distinct, this
completes the proof in Case (i). Indeed, we have $z |L|/3 = \Omega(n \cdot t(n))$, as required.

\smallskip
\emph{Case} \textbf{(ii)}. We sharpen the proof of Theorem~1.1 in~\cite{Dub18}, to conclude that
there are $\Omega(n^3) = \Omega(n \cdot t(n))$ $c$-ordinary triangles with $c=17$.
The main tools in the proof are incidence bounds for points-and-lines and averaging.

\smallskip
Let $X'$ be the set of points in $X$ that are incident to at least $33n/100$ lines
determined by $X$.

\begin{lemma} \label{lem:1/1000}
 We have $|X'| > n/1000$.
\end{lemma}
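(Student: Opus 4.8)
The plan is to prove the lemma by a direct double-counting/averaging argument anchored to a lower bound on the total number of point--line incidences. Write $d(x)$ for the number of lines of $\L$ passing through a point $x\in X$, so that the total number of incidences between $X$ and its connecting lines is $I=\sum_{x\in X} d(x)$. The key input is that we are in Case (ii), where at most $\alpha n = 0.2n \le 2n/3$ points of $X$ are collinear; hence the hypothesis of Lemma~\ref{lem:langer} is satisfied, and I would invoke it to conclude $I \ge n(n+3)/3 > n^2/3$. Equivalently, the average value of $d(x)$ exceeds $n/3$.

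Next I would bound $I$ from above in terms of $|X'|$ and compare. Splitting $X = X' \cup (X\setminus X')$, each point $x\in X'$ contributes at most the trivial amount $d(x)\le n-1$, while by the very definition of $X'$ each $x\in X\setminus X'$ contributes fewer than $33n/100$. This gives $I < |X'|(n-1) + (n-|X'|)\cdot \tfrac{33n}{100}$. Setting $a=|X'|$ and combining with the lower bound yields $n^2/3 < 0.33\,n^2 + a(0.67\,n-1)$, i.e. $a(0.67\,n-1) > n^2(\tfrac13 - \tfrac{33}{100}) = n^2/300$. Solving for $a$ (and using $n\ge 10^5$ to absorb the additive $-1$) produces $|X'| = \Omega(n)$, in fact comfortably more than $n/201$, which is far stronger than the claimed $|X'| > n/1000$.

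The only delicate point is arithmetic rather than conceptual, and it is exactly where I expect the care to be needed: the threshold $33n/100$ must be \emph{strictly} below the average degree $I/n \ge (n+3)/3 \sim n/3$ guaranteed by Langer, since otherwise the averaging collapses. Here the deficit is positive but small, $\tfrac13 - \tfrac{33}{100} = \tfrac{1}{300} > 0$, so the argument yields only a small positive fraction of high-degree points. I therefore expect the whole content to lie in having pinned the threshold $33n/100$ just under $n/3$ and in checking that the constant emerging from $1/300$ divided by the slope $0.67$ clears the stated $1/1000$ with margin; the rest is a routine linear estimate.
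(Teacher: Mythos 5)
Your proof is correct and takes essentially the same route as the paper: both rest on Langer's incidence bound (Lemma~\ref{lem:langer}, applicable since in Case (ii) at most $0.2n \le 2n/3$ points are collinear) played against the upper bound obtained by splitting incidences between $X'$ (trivial bound $n-1$ per point) and $X \setminus X'$ (definitional bound $33n/100$ per point). The only difference is presentational: the paper assumes $|X'| \le n/1000$ and derives the contradiction $|I| \le \frac{33067}{100000}n^2 < \frac13 n^2$, whereas you solve the same inequality directly for $|X'|$, which in fact yields the slightly stronger bound $|X'| > n/201$ for large $n$.
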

\begin{proof}
Recall that $\alpha=0.2$, hence $X$ satisfies the assumptions of Lemma~\ref{lem:langer}.
Let $\L$ denote the set of lines determined by $X$.
  Assume for contradiction that $|X'| \leq n/1000$. Let $I =I(X,\L)$ denote the set of incidences
  between $X$ and $\L$. We have
\begin{align*}
  |I| \leq |X'| \cdot (n-1) + |X \setminus X'| \cdot \frac{33n}{100}
  \leq \frac{n}{1000} \cdot n + \frac{999n}{1000} \cdot \frac{33n}{100} = \frac{33067n}{100000} n^2 < \frac13 n^2,
\end{align*}
contradicting Lemma~\ref{lem:langer}.
\end{proof}

\begin{lemma} \label{lem:0.27}
  Every point $p \in X'$ is incident to at least $0.27 n$ $c$-ordinary lines spanned by~$X$.
\end{lemma}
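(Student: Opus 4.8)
The plan is to fix a point $p \in X'$ and count the $c$-ordinary lines through $p$ by bounding the number of lines through $p$ that are \emph{not} $c$-ordinary, i.e., those incident to at least $c+1 = 18$ points of $X$. By definition of $X'$, the point $p$ is incident to at least $33n/100$ distinct lines spanned by $X$. If I can show that only a small fraction of these lines are ``rich'' (carry at least $c+1$ points), then the remaining lines through $p$ are $c$-ordinary and their count is at least $0.33n$ minus this small correction, which I would like to push down to the stated $0.27n$. The main device will be the incidence bound of de~Zeeuw, Lemma~\ref{lem:de-zeeuw2}, which controls exactly the number of lines incident to at least $k$ points, together with the lower bound on the total number of lines $|\L|$ available in Case (ii).

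First I would quantify how many lines spanned by $X$ are rich. Since we are in Case (ii), no line carries more than $\alpha n = 0.2 n < 2n/3$ points, so $X$ satisfies the hypothesis of Lemma~\ref{lem:de-zeeuw2}; applying it with $k = c+1 = 18 \geq 5$ gives that the number of lines incident to at least $18$ points is at most $4|\L|/(18-2)^2 = 4|\L|/256 = |\L|/64$. To turn this global bound into a bound on the lines through the single point $p$, I would invoke Lemma~\ref{lem:de-zeeuw1}: in Case (ii) no line has $\gamma n = 0.859\ldots n$ points (indeed no line has even $0.2n$ points), so alternative~(1) fails and we get the lower bound $|\L| \geq n^2/9$ on the total number of spanned lines. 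The count of rich lines through $p$ is at most the total count of rich lines spanned by $X$, which is at most $|\L|/64$; I will need this to be comfortably smaller than a $0.06n$ slack so that $0.33n - (\text{rich lines through } p) \geq 0.27n$.

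The delicate point, and the one I expect to be the main obstacle, is that the global bound $|\L|/64$ of rich lines need not all pass through $p$, and conversely the number of rich lines through $p$ could a priori be as large as the number of lines through $p$. The clean way to handle this is to count incidences between $p$ and rich lines directly: each rich line through $p$ carries at least $c$ further points of $X \setminus \{p\}$, and these point sets are disjoint across distinct lines through $p$, so if $R_p$ denotes the number of rich lines through $p$ then $c \cdot R_p \leq n - 1$, giving $R_p \leq (n-1)/c = (n-1)/17 < 0.06n$. Consequently the number of $c$-ordinary lines through $p$ is at least
\[
  \frac{33n}{100} - \frac{n-1}{17} > 0.33 n - 0.0589 n > 0.27 n,
\]
which is exactly the assertion. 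I note that this simple disjointness argument makes Lemma~\ref{lem:de-zeeuw2} unnecessary for \emph{this} particular bound, though it may be needed elsewhere in the global count; the key structural fact is only that the lines through a fixed point partition $X \setminus \{p\}$, so a point cannot lie on too many rich lines. I would verify the constant $0.27$ is attained with room to spare, and remark that the hypothesis $p \in X'$ is used solely to guarantee the starting figure of $33n/100$ lines through $p$.
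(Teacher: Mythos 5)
Your final argument coincides with the paper's own proof: you bound the number of non-$c$-ordinary lines through $p$ by $(n-1)/c$ using the fact that the lines through $p$ partition $X \setminus \{p\}$ (so each such line eats up at least $c$ points), and then compute $33n/100 - n/17 \geq 0.27n$, which is exactly the paper's calculation. The preliminary detour through Lemmas~\ref{lem:de-zeeuw1} and~\ref{lem:de-zeeuw2} is, as you yourself correctly observe, unnecessary for this lemma, so your proof is correct and essentially identical to the paper's.
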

\begin{proof}
We claim that $p$ is incident to at most $n/c$ lines that are not $c$-ordinary.
Assume that this is not the case, \ie, there are more than $n/c$ lines,  each incident to
at least $c$ points in $X \setminus \{p\}$. Then there are at least
\[ \frac{n}{c} \cdot c + 1 = n+1 \]
points in $X$, a contradiction. Recall that $c=17$.
By the previous claim and Lemma~\ref{lem:1/1000}, it follows that $p$ is incident
to at least
\[ \frac{33n}{100} - \frac{n}{17} \geq 0.27 \, n\]
$c$-ordinary lines, as required.
\end{proof}

We can now finalize the proof of Theorem~\ref{thm:n-t}.
The assumption of Lemma~\ref{lem:de-zeeuw2} is satisfied and so
the number of lines in $\L$ that are not $c$-ordinary is at most
\begin{equation} \label{eq:2/81}
\frac{4}{(c-1)^2} \, |\L| =  \frac{4}{256} \, |\L| \leq \frac{1}{64} \, {n \choose 2} \leq
\frac{1}{128} \, n^2.
\end{equation}

Let $p \in X'$ be an arbitrary point in $X'$, and let $\L(p)$ denote the set of $c$-ordinary lines
incident to $p$. By Lemma~\ref{lem:0.27}, we have $|\L(p)| \geq 0.27 n$.
Let $X(p) \subseteq X$ denote the set of points in $X$  that are incident to at least one line in $\L(p)$.
Obviously, we have $|X(p)| \geq 0.27 n$. Let $\L(X(p))$ denote the set of lines spanned by $X(p)$.

Note that $\alpha =0.2 < \gamma \cdot 0.27$, where $\gamma$ is the constant in Lemma~\ref{lem:de-zeeuw1}.
Thus, $X(p)$ does not satisfy the first condition of Lemma~\ref{lem:de-zeeuw1}. Therefore, it must satisfy the
second condition:
\[ |\L(X(p))| \geq \frac{|X(p)|^2}{9} \geq \frac{n^2}{124}. \]
At most $n-1$ lines in $\L(X(p))$ are incident to $p$. Thus, at least
\[ \frac{n^2}{124} -n \geq \frac{n^2}{125} \]
lines in $\L(X(p))$ are not incident to $p$.
From~\eqref{eq:2/81}, it follows that $X(p)$ determines
\[ y(p) \geq \left( \frac{1}{125} - \frac{1}{128} \right) n^2  \geq \frac{1}{6000}\, n^2\]
$c$-ordinary lines that are not incident to $p$.
By construction, any pair of points in $X(p)$ that belong to one of these lines, together with $p$,
forms a $c$-ordinary triangle, so $p$ participates in $\Omega(n^2)$ $c$-ordinary triangles.
In view of Lemma~\ref{lem:1/1000}, the number of $c$-ordinary triangles with at least one vertex
in $X'$ is at least
\[ \Omega \left( \sum_{p \in X'} y(p) \right) = \Omega(n^3) = \Omega( n \cdot t(n)). \]

\smallskip

The tightness of this bound follows from Proposition~\ref{prop:k2}.
This concludes the proof of Theorem~\ref{thm:n-t}.
\qed

\subparagraph{Proof of Theorem~\ref{thm:n/2-t}.}
Set $c=17$ and $\alpha=0.2$.
Let $\L$ denote the set of lines determined by $X$.
As before, we distinguish two cases in the proof:

\begin{enumerate} [(i)] \itemsep 1pt
\item There is a line $\ell \in \L$ such that $|\ell \cap X| \geq \alpha n$.
\item For every line $\ell \in \L,$ we have $|\ell \cap X| \leq \alpha n$.
\end{enumerate}

\smallskip
\emph{Case} \textbf{(i)}. Let $\ell$ be one of the lines passing through the largest number of elements of $X$.
Let $L:= \{x \in \ell \cap X\}$ and $Y:= X \setminus L$.
Note that $|Y| \geq n/2 + t(n)$.
Let $\L'$ denote the set of lines spanned by $Y$.
Since at most $n/2 - t(n)$ points of $Y$ are collinear, by Lemma~\ref{lem:payne-wood},
$Y$ spans at least
\[ \frac{|Y| \cdot (|Y| -|L|)}{98} \geq \frac{(n/2 + t(n)) \cdot 2 t(n)}{98} = \Omega (n t(n)) \]
lines.

By assumption, $Y$ is noncollinear, so we can apply Beck's Lemma~\ref{lem:beck2}. We obtain that
at least half of the lines spanned by $Y$ contain at most $3$ points of $Y$.
Thus, among all lines spanned by $Y$, at least $\Omega (n t(n))$ are $4$-ordinary in $X$.
Letting $\L'_c$ denote the set of $c$-ordinary lines with respect to $X$ that are spanned by $Y$,
we have $|\L'_c| = \Omega (n t(n))$.
For each line $\ell' \in \L'_c$, fix two arbitrary points $p,q \in \ell'$.

By Lemma~\ref{lem:auxiliary}, each pair $p,q$ occurs in at least $|L|/3 = \Omega(n)$ $c$-ordinary triangles
with the third vertex on $\ell$. Since any two triangles that belong to different pairs are distinct, this
completes the proof in Case (i). Indeed, we have $|\L'_c| \cdot |L|/3 = \Omega(n^2 \cdot t(n))$, as required.

\smallskip
\emph{Case} \textbf{(ii)}.
Same proof as for case (ii) in Theorem~\ref{thm:n-t} applies.
The number of $c$-ordinary triangles found is $\Omega(n^3) = \Omega( n^2 \cdot t(n))$.
\smallskip

For the tightness, distribute the $n$ points on three parallel lines incident to
$n/2 -t(n)$, $n/2 -t(n)$, and $2 t(n)$ points, respectively, and observe that
any $c$-ordinary triangle must have a vertex on each of these lines. Therefore,
the number of $c$-ordinary triangles is at most
\[ \frac{n}{2} \cdot \frac{n}{2} \cdot 2 t(n)= \frac12 \cdot n^2 \cdot t(n), \]
as claimed.
\qed

\section{Algorithmic aspects}

In this section, we establish Proposition~\ref{prop:reporting} and Theorem~\ref{thm:detection}.
Let $\ell(a,b)$ denote the line spanned by $a,b \in X$.

\subparagraph{Proof of Proposition~\ref{prop:reporting}.}
Let $\L$ be the set of lines spanned by $X$. For every line $\ell \in \L$, let $I(\ell)$ denote the
number of incidences of $\ell$ with points in $X$.

In the first phase, the algorithm determines the counts $I(\ell)$ over all $\ell \in \L$
by using the standard point-line duality~\cite[Chap.~8]{BCKO08}, after computing the
dual line arrangement. In this setting, the count $I(\ell(a,b))$ is equal to the number of lines
incident to the intersection point of the dual lines $D(a)$ and $D(b)$.
Since the dual line arrangement can be computed in $O(n^2)$ time, the first phase takes $O(n^2)$ time.
Alternatively, computing $I(\ell)$ for all $\ell \in \L$ can be done by brute force in cubic time,
working in the primal setting.

In the second phase, the algorithm scans all triples $a,b,c$ of points in $X$; for each
such triple it determines whether the lines $\ell(a,b)$,  $\ell(b,c)$, and $\ell(c,a)$ are
$\tau$-ordinary and distinct. If so, the triangle $\Delta{abc}$ is $\tau$-ordinary and thus included in the output.
Duplicates can be easily filtered out. Since the number of triples is $O(n^3)$, the second phase,
and thus the entire algorithm, takes $O(n^3)$ time.

Note that for a set of $n$ points in general position, all ${n \choose 3}$ triangles are ordinary.
Similarly, under broad conditions as in Theorem~\ref{thm:n/2-t}, the number of $\tau$-ordinary
triangles is cubic when $\tau=17$. It follows that the algorithm is asymptotically optimal
in the worst case.
\qed

\subparagraph{Proof of Theorem~\ref{thm:detection}.}
Let $X=\{x_1,x_2,\ldots,x_n\}$. 
As in the proof of Proposition~\ref{prop:reporting}, in the first phase, the algorithm determines
the counts $I(\ell)$ over all $\ell \in \L$ in $O(n^2)$ time.
In the second phase, the algorithm constructs an undirected graph $G=(X,E)$ on vertex set $X$, where
two vertices are connected by an edge if and only if the connecting line is $\tau$-ordinary.
Since $|X|=n$, constructing $G$ takes $O(n^2)$ time based on the previous information.

In general, \ie, for $\tau \geq 3$, the set of $\tau$-ordinary triangles spanned by $X$ is only
a proper subset of the set of triangles in $G$; this is because three collinear points on a $\tau$-ordinary
line form a triangle in $G$ but do \emph{not} form a $\tau$-ordinary triangle. 

For counting, one can count all graph triangles with the algorithm of Itai and Rodeh
(cf. Lemma~\ref{itai}) and subtract ${s \choose 3}$ for each $\tau$-ordinary line containing $s$ points.
Detection can then be handled by checking whether the adjusted count is positive.

For actually finding a $\tau$-ordinary triangle, we show how to extract a non-collinear triangle
without increasing the running time. To this end, we adapt the the algorithm of Itai and Rodeh
for triangle detection in $G$ (cf. Lemma~\ref{itai}). Specifically we compute the following matrices:

\begin{enumerate}
\item $A=A[i,j]$ is the standard 0-1 adjacency matrix of $G$: $A[i,j]=1$ iff $\ell(x_i,x_j)$ is a
  $\tau$-ordinary line ($A[i,i]=0$);
\item $B=A^2$ is the integer matrix that records the number of paths of length $2$: \ie,
  for $i \neq j$, $B[i,j]$ is the number of paths $x_ix_kx_j$ between $x_i$ and $x_j$, $k \notin\{i,j\}$;
\item $C=C[i,j] =I(\ell(x_i,x_j))$ records the number of points in $X$ on the line $\ell(x_i,x_j)$,  for $i \neq j$;
\item $N=N[i,j]$ records the number of non-trivial paths of length $2$ between $x_i$ and $x_j$,  for $i \neq j$;
a path $x_ix_kx_j$ is \emph{non-trivial} if $x_k \notin \ell(x_i,x_j)$ and \emph{trivial} otherwise. Note that 
the number of two-edge trivial paths between $x_i$ and $x_j$ is $C[i,j]-2$, thus $N[i,j]= B[i,j] -(C[i,j] -2)$.
\end{enumerate}

Now a pair $x_i,x_j \in X$ determines a $c$-ordinary triangle $x_ix_jx_k$ iff $A[i,j]=1$ and $N[i,j] \geq 1$. 
Having computed the matrices $A$, $B$, $C$, and $N$ in $O(n^\omega)$ time, a $c$-ordinary triangle,
if it exists, can be found by looking at a pair $i,j$ with $A[i,j]=1$ and $N[i,j] \geq 1$
and identifying a witness $k$ for the latter inequality. The overall running time is $O(n^\omega)$.
\qed

\smallskip
Given $X$, finding the smallest $\tau$ for which there is a  $\tau$-ordinary triangle
spanned by $X$ is easily accomplished in $O(n^\omega \log{\tau})$ time: use the detection algorithm
from Theorem~\ref{thm:detection} for $\tau=2,4,8,\ldots$ until the answer is positive, followed by
binary search in the interval between the last negative answer and the first positive answer.

It would be interesting to know whether $\tau$-ordinary triangle detection can be done in quadratic time
or by a truly subcubic combinatorial algorithm.


\begin{thebibliography}{99}

\bibitem{AFK+24}
Amir Abboud, Nick Fischer, Zander Kelley, Shachar Lovett, and Raghu Meka,
New graph decompositions and combinatorial Boolean matrix multiplication algorithms,
\emph{Proc. 56th Annual ACM Symposium on Theory of Computing {STOC} 2024},
Vancouver, BC, Canada, ACM, pp.~935--943.

\bibitem{Be83}
J{\'o}zsef Beck,
On the lattice property of the plane and some problems of
Dirac, Motzkin and Erd\H{o}s in combinatorial geometry,
\emph{Combinatorica}
\textbf{3} (1983), 281--297.

\bibitem{BCKO08}
Mark de Berg, Otfried Cheong, Marc van Kreveld, and Mark Overmars,
\emph{Computational Geometry: Algorithms and Applications},
3rd edition, Springer, 2008.

\bibitem{BM90}
Peter Borwein and William O. J. Moser,
A survey of Sylvester's problem and its generalizations,
\emph{Aequationes Mathematicae}
\textbf{40(1)} (1990), 111--135.

\bibitem{BVZ16}
Thomas Boys, Claudiu  Valculescu, and Frank de Zeeuw,
On the number of ordinary conics,
\emph{SIAM Journal on Discrete Mathematics}
\textbf{30(3)} (2016), 1644-1659.

\bibitem{BMP05}
Peter Bra\ss , William Moser, and J\'anos Pach,
\emph{Research Problems in Discrete Geometry},
Springer, New York, 2005.

\bibitem{Chv21}
Va\v{s}ek Chv\'atal,
\emph{The Discrete Mathematical Charms of Paul Erd\H{o}s},
Cambridge University Press, New York, 2021.

\bibitem{CFG91}
Hallard T.~Croft, Kenneth~J.~Falconer, and Richard~K.~Guy,
  \emph{Unsolved Problems in Geometry},
  Springer, New York, 1991.

\bibitem{CS93}
Joseph Csima and Eric Sawyer,
There exist $6 n/13$ ordinary points,
\emph{Discrete \& Computational Geometry}
\textbf{9(2)} (1993), 187--202.

\bibitem{Dir51}
Gabriel A. Dirac,
Collinearity properties of sets of points,
\emph{Quarterly J. Mathematics, Oxford
Ser. (2)} \textbf{2} (1951), 221--227.

\bibitem{DWZ23}
Ran Duan, Hongxun Wu, and Renfei Zhou,
Faster matrix multiplication via asymmetric hashing,
\emph{Proc. 64th Annual Symposium on Foundations of Computer Science} (FOCS 2023),
IEEE, pp.~2129--2138.

\bibitem{Dub18}
Quentin Dubroff,
A better bound for ordinary triangles,
Preprint, 2018, \url{arXiv:1805.06954}.

\bibitem{Erd61}
Paul Erd\H os,
Some unsolved problems,
\emph{Magyar  Tudom\'anyos  Akad\'emia  Matematikai  Kutat\'o  Int\'ezet\'enek
K\"ozlem\'enyei}
\textbf{6} (1961), 221--254.

\bibitem{EP95}
Paul Erd\H{o}s and George Purdy,
Extremal problems in combinatorial geometry,
in \emph{Handbook of Combinatorics} (vol. 1),
(Ronald Graham, L{\'a}szl{\'o} Lov{\'a}sz, and Martin Gr{\"o}tschel, editors),
Elsevier, 1995, pp.~809--874.

\bibitem{FMN+17}
Radoslav Fulek, Hossein N. Mojarrad, M{\'a}rton Nasz{\'o}di, J{\'o}zsef Solymosi,
Sebastian Stich, and May Szedl{\'a}k,
On the existence of ordinary triangles,
\emph{Computational Geometry: Theory and Applications}
\textbf{66} (2017), 28--31.

\bibitem{GT13}
Ben Green and Terence Tao,
On sets defining few ordinary lines,
\emph{Discrete \& Computational Geometry}
\textbf{50(2)} (2013), 409--468.

\bibitem{Han17}
 Zeye Han,
A Note on the weak Dirac conjecture,
\emph{Electronic Journal of Combinatorics}
\textbf{24(1)} (2017), \#P1.63.

\bibitem{IR78}
Alon Itai and Michael Rodeh,
Finding a minimum circuit in a graph,
\emph{SIAM Journal on Computing}
\textbf{7(4)} (1978), 413--423.

\bibitem{KM58}
Leroy M. Kelly and William O. J. Moser,
On the number of ordinary lines determined by $n$ points,
\emph{Canadian Journal of Mathematics}
\textbf{10} (1958), 210--219.

\bibitem{KW91}
Victor Klee and Stan Wagon,
\emph{Old and New Unsolved Problems in Plane Geometry and Number Theory},
Mathematical Association of America, Washington, DC, 1991.

\bibitem{La03}
Adrian Langer,
Logarithmic orbifold Euler numbers of surfaces with applications,
\emph{Proceedings of the London Mathematical Society}
\textbf{86(2)} (2003), 358--396.

\bibitem{Man89} Udi Manber,
\emph{Introduction to Algorithms - A Creative Approach},
Addison-Wesley,  Reading, Massachusetts, 1989.

\bibitem{Mel41}
Eberhard Melchior,
\"Uber Vielseite der projektiven Ebene,
\emph{Deutsche Mathematik} \textbf{5} (1941), 461--475.

\bibitem{Mo51}
Theodore S. Motzkin,
The lines and planes connecting the points of a finite set,
\emph{Transactions of the American Mathematical Society}
\textbf{70(3)} (1951), 451--464.

\bibitem {PS09}
J\'anos Pach and Micha Sharir,
\emph{Combinatorial Geometry and Its Algorithmic Applications---The Alcal\'{a} Lectures},
Mathematical Surveys and Monographs, Vol. 152, American Mathematical Society,
Providence, RI, 2009.

\bibitem{PW14}
Michael S. Payne and David R. Wood,
Progress on Dirac's conjecture,
\emph{Electronic Journal of  Combinatorics}
\textbf{21(2)} (2014), \#P2.12.

\bibitem{PP17}
 Hoang Ha Pham and Tien Cuong Phi,
 A new progress on weak Dirac conjecture,
 preprint, 2016, \url{arXiv:1607.08398}.

\bibitem{Sy893}
James Joseph Sylvester,
Mathematical question 11851,
\emph{Educational Times}
\textbf{46} (1893), 156.

\bibitem{SzT83}
Endre Szemer\'edi and William T. Trotter, Jr.,
Extremal problems in discrete geometry,
\emph{Combinatorica} \textbf{3}(3-4) (1983), 381--392.

\bibitem{VXXZ24}
Virginia Vassilevska Williams, Yinzhan Xu, Zixuan Xu, and Renfei Zhou,
New bounds for matrix multiplication: from alpha to omega,
\emph{Proc. of the 2024 Annual ACM-SIAM Symposium on Discrete Algorithms} (SODA),
SIAM, pp.~3792--3835.

\bibitem{Zee18}
Frank de Zeeuw,
Spanned lines and Langer's inequality,
Preprint, 2018, \url{arXiv:1802.08015}.

\end{thebibliography}
\end{document}